\theoremstyle{plain}
\newtheorem{theorem}{Theorem}[section]
\newtheorem{result}{Result}[section]
\newtheorem{lemma}[theorem]{Lemma}
\newtheorem{corollary}[theorem]{Corollary}
\newtheorem{proposition}[theorem]{Proposition}
\theoremstyle{definition}
\newtheorem{conjecture}{Conjecture}
\newtheoremstyle{TheoremNum}
	{\topsep}{\topsep}              
  {\itshape}                      
  {}                              
  {\bfseries}                     
  {.}                             
  { }                             
  {\thmname{#1}\thmnote{ \bfseries #3}}
\newtheorem{remark}{Remark}
\newcommand{\R}{\mathbb R}
\newcommand{\Z}{\mathbb Z}
\newcommand{\RN}[1]{%
  \textup{\uppercase\expandafter{\romannumeral#1}}%
}
\newcommand{\rn}[1]{%
  \textup{\lowercase\expandafter{\romannumeral#1}}%
}
 \def\zhou#1 {\fbox {\footnote {\ }}\ \footnotetext { From Yue: {\color{red}#1}}}
\begin{document}
	\title[]{No lattice tiling of $\Z^n$ by Lee Sphere of radius 2}
	\author[K.\ H.\ Leung]{Ka Hin Leung\textsuperscript{\,1}}
	\address{\textsuperscript{1}Department of Mathematics, National University of Singapore, 119076 Singapore}
	\email{matlkh@nus.edu.sg}
	\author[Y.\ Zhou]{Yue Zhou\textsuperscript{\,2, $\dagger$}}
	\address{\textsuperscript{2}College of Liberal Arts and Sciences, National University of Defense Technology, 410073 Changsha, China}
	\address{\textsuperscript{$\dagger$}Corresponding author}
	\email{yue.zhou.ovgu@gmail.com}
	
	\begin{abstract}
		We prove the nonexistence of lattice tilings of $\Z^n$ by Lee spheres of radius $2$ for all dimensions $n\geq 3$. This implies that the Golomb-Welch conjecture is true when the common radius of the Lee spheres equals $2$ and $2n^2+2n+1$ is a prime. As a direct consequence, we also answer an open question in the degree-diameter problem of graph theory: the order of any abelian Cayley graph of diameter $2$ and degree larger than $5$ cannot meet the abelian Cayley Moore bound.
	\end{abstract}
	\keywords{Golomb-Welch conjecture; Lattice tiling; Algebraic tiling; Degree-diameter problem; Perfect Lee code.}
	\subjclass{52C22, 11H31, 05C25, 11H71}
	\maketitle

\section{Introduction}
The \emph{Lee distance} (also known as \emph{$\ell_l$-norm}, \emph{taxicab metric}, \emph{rectilinear distance} or \emph{Manhattan distance}) between two vectors $x=(x_1,x_2,\cdots, x_n)$ and $y=(y_1,y_2,\cdots, y_n)\in\Z^n$ is defined by
\[ d_L(x,y)=\sum_{i=1}^n|x_i-y_i|. \]
Let $S(n,r)$ denote the Lee sphere of radius $r$ centered at the origin in $\Z^n$, i.e.
\[S(n,r)=\left\{(x_1,\cdots, x_n)\in\Z^n: \sum_{i=1}^{n}|x_i|\leq r\right\}.  \]
If there exists a subset $C\in \Z^n$ such that $\mathcal{T}=\{S(n,r)+c: c\in C \}$ forms a partition of $\Z^n$, then we say that $\mathcal{T}$ is a \emph{tiling} of $\Z^n$ by $S(n,r)$. If $C$ is further a lattice, then we call $\mathcal{T}$ a \emph{lattice tiling}.

One may get a geometric interpretation of tilings of $\Z^n$ by Lee spheres in the following way. Let $\R$ denote the set of real numbers and $C(x_1, \cdots, x_n)=\{(y_1,\cdots, y_n): |y_i-x_i|\leq 1/2 \}$ which is the $n$-cube centered at $(x_1,\cdots, x_n)\in \R^n$. Let $L(n,r)$ be the union of $n$-cubes centered at each point in $S(n,r)$. Figure \ref{fig:L} depicts $L(n,r)$ for $n=2,3$ and $r=1,2$. 

\begin{figure}[h!]
	\centering
	\includegraphics[width=1\linewidth]{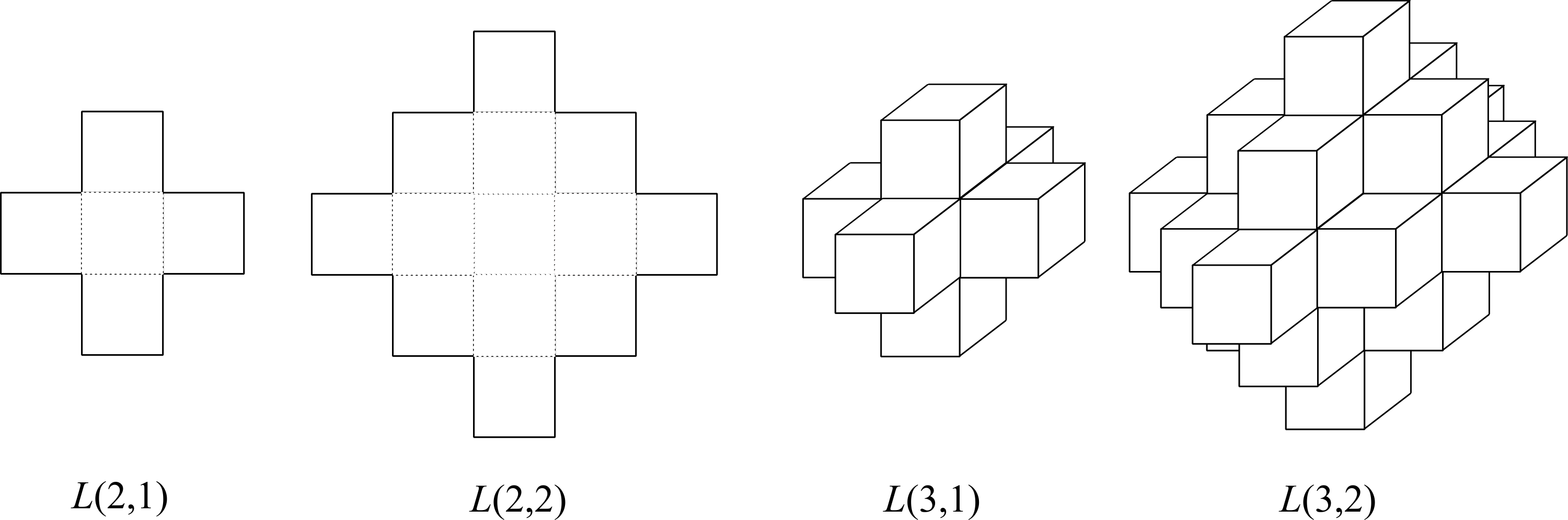}
	\caption{Figures of $L(2,1)$, $L(2,2)$, $L(3,1)$ and $L(3,2)$}
	\label{fig:L}
\end{figure}

It is easy to see that a tiling of $\Z^n$ by $S(n,r)$ exits if and only if a tiling of $\R^n$ by $L(n,r)$ exists. Figure \ref{fig:tiling} shows a (lattice) tiling of $\R^2$ by $L(2,2)$. Actually lattice tilings of $\R^n$ by $L(n,r)$ for $n=1,2$ and any radius $r$ always exist and lattice tilings of $\R^n$ by $L(n,1)$ also exist for any $n$; see \cite{golomb_perfect_1970}. 

\begin{figure}[h!]
	\centering
	\includegraphics[width=0.8\linewidth]{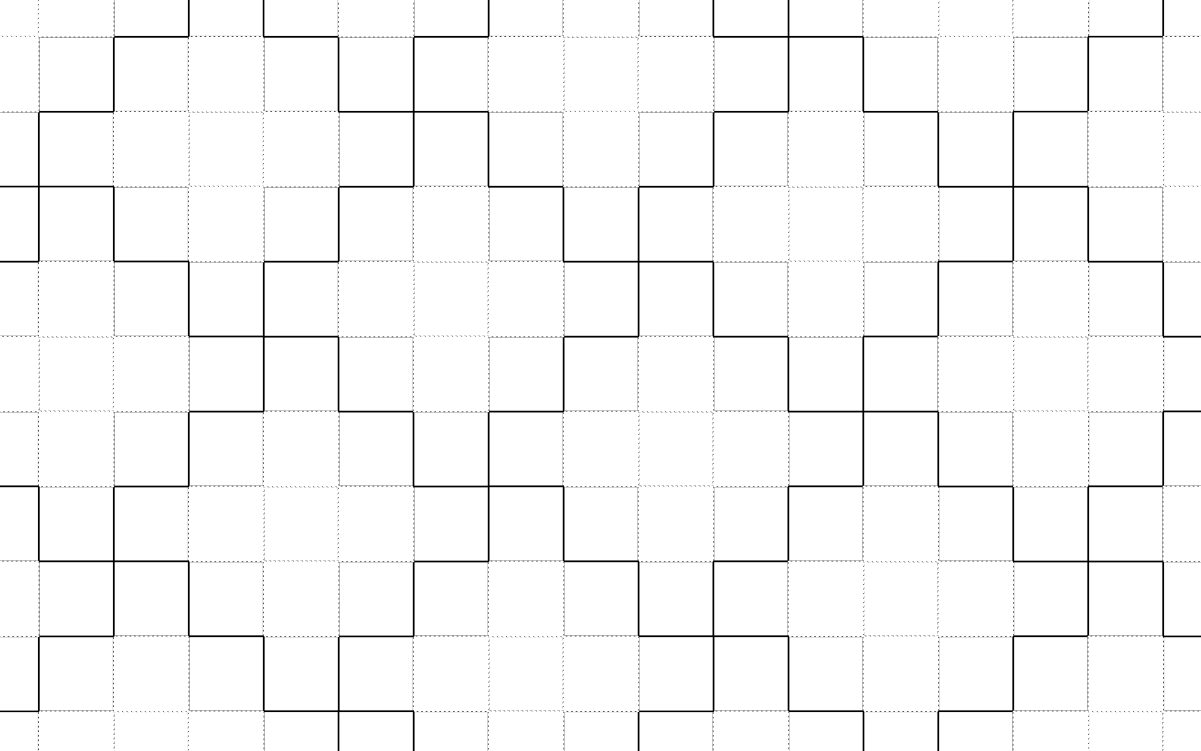}
	\caption{Tiling of $\R^2$ by $L(2,2)$}
	\label{fig:tiling}
\end{figure}

This geometric interpretation in $\R^n$ is quite important, because $L(n,r)$ is close to a cross-polytope when $r$ is large enough. It follows that a tiling of $\R^n$ with $L(n,r)$ induces a dense packing of $\R^n$ by cross-polytopes. One can use the cross-polytope packing density or the linear programming method which is originally applied on the Euclidean sphere packing density in \cite{cohen_sphere_packing_2003} to show the following type of results.
\begin{result}\label{result:geometric}
	For any $n\geq 3$, there exists $r_n$ such that for $r>r_n$, $\R^n$ cannot be tiled by $L(n,r)$. 	
\end{result}
Result \ref{result:geometric} was first obtained by Golomb and Welch who showed in \cite{golomb_perfect_1970} only the existence of $r_n$. However, the value of $r_n$  three is unspecified. Later, several lower bounds on $r_n$ for the periodic case were obtained by Post \cite{post_nonexistence_1975} and Lepist\"o \cite{lepisto_modification_1981}.  In \cite{horak_50_2018} the very first lower bound on $r_n$ is stated.

In the same seminal paper \cite{golomb_perfect_1970}, Golomb and Welch proposed the following conjecture originally given in the language of perfect Lee codes.
\begin{conjecture}\label{conj:GW}
	For $n\geq 3$ and $r\geq 2$, there is no perfect $r$-error-correcting Lee code in $\Z^n$, i.e.\ $\Z^n$ cannot be tiled by Lee spheres of radius $r$.
\end{conjecture}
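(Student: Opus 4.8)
The plan is to prove the conjecture by splitting the radius into two regimes and exploiting the geometric equivalence recorded above: a tiling of $\Z^n$ by $S(n,r)$ exists if and only if $\R^n$ is tiled by the cube-complex $L(n,r)$. Since $\tfrac1r L(n,r)$ converges (in Hausdorff distance) to the cross-polytope $\{x\in\R^n:\sum_i |x_i|\le 1\}$ as $r\to\infty$, a tiling by $L(n,r)$ for large $r$ would force the cross-polytope packing density in dimension $n$ to approach $1$, which is impossible for $n\ge 3$. This is exactly Result~\ref{result:geometric}. So the first step is to make the threshold $r_n$ in Result~\ref{result:geometric} \emph{effective}: combining a quantitative upper bound on the cross-polytope packing density via the Cohn--Elkies linear-programming framework of \cite{cohen_sphere_packing_2003} with explicit discretization estimates controlling the deviation of $\tfrac1r L(n,r)$ from the cross-polytope, one should obtain an explicit $r_n$ excluding every tiling with $r>r_n$.

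The second step is the finite-but-nontrivial regime $2\le r\le r_n$ for each fixed $n$, where the packing argument is too blunt and the arithmetic structure of a perfect Lee code must be used directly. For a fixed pair $(n,r)$, a tiling means that every point of $\Z^n$ lies in exactly one translate $S(n,r)+c$. First I would encode the tile $S(n,r)$ and the code $C$ as generating functions (group-ring elements) and pass to characters; the tiling identity then becomes a system of vanishing-sum and valuation conditions from which contradictions can be extracted. The radius-$2$ lattice tiling result that this paper establishes supplies the prototype for the hardest small case, and the aim would be to extend that algebraic and character-theoretic machinery to the remaining radii $3\le r\le r_n$.

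The hard part will be the two gaps that separate this program from a complete proof, and they are precisely why only special cases are currently within reach. \textbf{First}, the algebraic arguments are naturally carried out for lattice, or at best periodic, tilings, whereas the conjecture as stated excludes \emph{arbitrary} tilings; one cannot appeal to a general periodicity principle, since the periodic tiling conjecture is false in high dimension, so a rigidity argument specific to $S(n,r)$ that forces any tiling to be periodic, or a direct local-to-global combinatorial exclusion, would be required. \textbf{Second}, even granting an explicit $r_n$, the number of intermediate pairs $(n,r)$ grows with $n$, so a case-by-case enumeration is hopeless and the character-theoretic exclusion must be made uniform in $r$ (and ideally in $n$). I expect this uniform treatment of the intermediate regime, together with the passage from periodic to arbitrary tilings, to be the principal obstacle — and the reason the full conjecture remains open while the radius-$2$ lattice result proved here is the kind of special case that present techniques can settle.
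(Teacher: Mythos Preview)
The paper does not prove this statement: it is presented as Conjecture~\ref{conj:GW} and is explicitly described as ``still far from being solved.'' The paper's contribution is Theorem~\ref{th:main}, the \emph{lattice} case for $r=2$ only, and even for $r=2$ the full (non-lattice) case is only deduced under the extra hypothesis that $2n^2+2n+1$ is prime. So there is no proof in the paper to compare your attempt against.

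Your proposal is, as you acknowledge in its final paragraph, a research programme rather than a proof, and the two obstacles you single out are precisely the genuine gaps. First, the character/group-ring machinery used here (Lemma~\ref{lm:group_ring} and everything built on it) requires a lattice structure to even formulate the equations $T^2=2G-T^{(2)}+2n$; without periodicity there is no finite abelian $G$ to work in, and no general rigidity principle is available to force it --- indeed the paper notes (Remark~\ref{remark:no_inf}) that even for $r=2$ one cannot currently conclude the non-lattice case for infinitely many $n$. Second, making $r_n$ from Result~\ref{result:geometric} effective via Cohn--Elkies does not yield a finite problem: the bound grows with $n$, so the ``intermediate regime'' $2\le r\le r_n$ is an infinite family of $(n,r)$, and the mod-$3$/mod-$5$ multiplicity analysis in this paper is bespoke to $r=2$ (it exploits the specific identity $T^2=2G-T^{(2)}+2n$) with no evident uniform extension. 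In short, what you have written is an accurate map of why the conjecture is open, not a proof of it.
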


Conjecture \ref{conj:GW} is still far from being solved, though various approaches have been applied on it. We refer the reader to the recent survey \cite{horak_50_2018} and the references therein.

In \cite{horak_50_2018} Horak and Kim suggest that $r=2$ appears to be the most difficult case of Conjecture \ref{conj:GW} for two reasons. First it is the threshold case, because $\Z^n$ can always be tiled by $S(n,1)$. Second the proof of Conjecture \ref{conj:GW} for $3 \leq n \leq 5$ and all $r \geq 2$ in \cite{horak_tilings_2009} is based on the nonexistence of tilings of $\Z^n$ by $S(n,2)$ for the given $n$.

In this direction, there are several recent advances. In \cite{horak_new_2014}, Conjecture \ref{conj:GW} is proved for $n\leq 12$ and $r=2$. In \cite{kim_2017_nonexistence}, Kim presents a method based on symmetric polynomials to show that Conjecture \ref{conj:GW} is true for $r=2$ and a certain class of $n$ satisfying that $|S(n,2)|$ is a prime. This approach has been further applied to the lattice tilings of $\Z^n$ by $S(n,r)$ with larger $r$ in \cite{qureshi_nonexistence_ZGcondition_arxiv} and \cite{zhang_perfect_lp_2007}. In \cite{zhang_nonexistence_2019}, Zhang and the second author translated the lattice tilings of $\Z^n$ by $S(n,2)$ or $S(n,3)$ into group ring equations. By applying group characters and algebraic number theory, they have obtained more nonexistence results for infinitely many $n$ with $r=2$ and $3$.

In this paper, we completely solve the lattice tiling cases of Conjecture \ref{conj:GW} for $r=2$ and any $n$.
\begin{theorem}\label{th:main}
	For any integer $n\geq 3$, there is no lattice tiling of $\Z^n$ by $S(n,2)$.
\end{theorem}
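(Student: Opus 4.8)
The plan is to follow the group-ring approach of Zhang and the second author in \cite{zhang_nonexistence_2019}. Suppose $\mathcal{T}=\{S(n,2)+c : c\in C\}$ is a lattice tiling of $\Z^n$; then $G:=\Z^n/C$ is a finite abelian group of order $2n^2+2n+1$, and the images $e_1,\dots,e_n$ of the standard basis vectors, together with $0$, give a subset $D=\{0,\pm e_1,\dots,\pm e_n, \pm 2e_1,\dots,\pm 2e_n,\, \pm e_i\pm e_j\ (i\neq j)\}$... wait, more simply: working in the group ring $\Z[G]$, the tiling condition becomes the single equation
\[
  \Big(\sum_{i=1}^n \big(g_i + g_i^{-1} + g_i^2 + g_i^{-2}\big) + \sum_{i<j}\big(g_ig_j + g_i^{-1}g_j^{-1} + g_ig_j^{-1}+g_i^{-1}g_j\big) + 1\Big) = \Z[G]
\]
where $g_i$ is the image of the $i$-th basis vector and the left side is read as an element of $\Z[G]$ equal to $\sum_{g\in G} g$. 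Equivalently, writing $A=\sum_i(g_i+g_i^{-1})$, the Lee-sphere element is $1 + A + \tfrac12(A^2 - B) $ where $B=\sum_i(g_i^2+g_i^{-2}) + 2n$ collects the "diagonal" terms; one rearranges this into a clean identity in $\Z[G]$. Applying a nontrivial character $\chi$ of $G$ annihilates the right-hand side, so for every nontrivial $\chi$ we get a constraint on the algebraic integers $\chi(g_i)$, which are roots of unity.

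The next step is to extract number-theoretic structure from these character equations. First I would handle the characters of small order. A character $\chi$ of order $2$ forces each $\chi(g_i)\in\{\pm1\}$, and plugging into the Lee-sphere evaluation $\chi(1+A+\cdots)=0$ gives a quadratic Diophantine relation among the numbers $s=\#\{i:\chi(g_i)=1\}$ and $n-s$; one shows this has no solution, so $G$ has no element of order $2$, hence (since $|G|=2n^2+2n+1$ is odd anyway) this is automatic, and more usefully one rules out characters of order $3,4$ similarly. The heart of the matter is characters of large prime order $p \mid |G|$: here $\chi(g_i)$ are $p$-th roots of unity $\zeta^{a_i}$, and the vanishing of the Lee-sphere sum becomes a vanishing sum of roots of unity of a very restricted shape (at most $2n^2+2n+1$ terms, with multiplicities), which by the theory of vanishing sums of roots of unity (Conway--Jones, or Lam--Leung) must decompose into "rotated" copies of the minimal relation $1+\zeta_p+\cdots+\zeta_p^{p-1}=0$. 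Counting terms then pins down $p$ and forces $a_i$ to lie in a single coset structure, ultimately contradicting the fact that the $g_i$ together with the relations from $C$ must generate all of $G$.

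**The main obstacle** I expect is the case analysis for the "medium-sized" prime divisors of $|G|=2n^2+2n+1$ and the possibility that $|G|$ is not prime. When $2n^2+2n+1$ is prime (the case already flagged in the abstract as giving Golomb--Welch), $G$ is cyclic of prime order and the vanishing-sum-of-roots-of-unity analysis is cleanest; the real work is to show that composite $|G|$ — where one must run the argument prime by prime, track how the $\chi(g_i)$ behave under characters of each prime-power order, and recombine via the structure theorem for finite abelian groups — cannot occur either. I would organize this by: (i) proving a structural lemma that for each prime $p\mid |G|$, after applying all characters of $p$-power order, the multiset $\{a_i \bmod p^k\}$ is forced into an arithmetic-progression-like pattern; (ii) showing these patterns for different primes are incompatible with $\sum$-type counting identities that come from the total size $|G|=|S(n,2)|$; (iii) closing the finitely many genuinely small cases ($n=3,4,5,\dots$ up to some explicit bound) by the earlier results or by direct computation. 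The delicate point throughout is bounding the number of distinct roots of unity appearing, since $S(n,2)$ has $2n^2+2n+1$ elements and a priori the $g_i, g_i^2, g_ig_j^{\pm1}$ could collide in many ways in $G$; controlling these collisions — equivalently, controlling the automorphisms/symmetries of the configuration — is where I anticipate the argument to be most technical.
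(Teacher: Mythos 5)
Your proposal is a programme rather than a proof, and its two central steps are exactly the ones left unsupplied: the ``structural lemma'' in (i) forcing the exponents $a_i \bmod p^k$ into an arithmetic-progression-like pattern, and the claimed cross-prime incompatibility in (ii). Nothing in the sketch indicates how either would be established, and there is concrete reason to doubt this route closes for all $n$: it is precisely the character/vanishing-sums-of-roots-of-unity method of \cite{zhang_nonexistence_2019}, which (as the introduction of this paper notes) yields nonexistence only for infinitely many $n$, not all $n$. When $|G|=2n^2+2n+1$ is composite, the Lam--Leung decomposition for a character of prime order $p$ leaves far too much freedom --- each rotated copy of $1+\zeta_p+\cdots+\zeta_p^{p-1}$ accounts for only $p$ of the roughly $2n^2$ terms, coefficients up to $2$ occur, and the resulting constraints on the $\chi(g_i)$ have not been shown to be mutually inconsistent. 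There is also a small but telling slip: you propose to rule out characters of order $2$, $3$, $4$, but $|G|=2n^2+2n+1$ is odd and never divisible by $3$, so those characters do not exist and that part of the analysis is vacuous. As written, the proposal does not constitute a proof and does not identify a mechanism likely to complete one.

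For comparison, the paper's proof applies no characters at all. From $T^{2}=2G-T^{(2)}+2n$ (Lemma \ref{lm:group_ring}) it derives $T^{(2)}T=2(2n+1)G-T^{3}+2nT$ and reduces modulo $3$ via $T^{3}\equiv T^{(3)} \pmod 3$; an inclusion--exclusion count of the distinct elements of $T^{(2)}T=\sum_i a_iT$ (Lemma \ref{lm:T^2T}) combined with the two obvious counting identities then determines the multiplicity distribution of $T^{(2)}T$ exactly when $n\equiv 1,2\pmod 3$ and eliminates $n\equiv 0\pmod 3$ outright. A parallel analysis of $T^{(4)}T$ modulo $5$ produces, in each remaining residue class of $n$ modulo $15$, a quadratic inequality forcing $n$ to be small, and the small cases are closed by Lemma \ref{lm:list<=100}. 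If you wish to pursue your approach, you would need to prove the structural and incompatibility lemmas you postulate --- which is exactly where the earlier character-based work stalled.
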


It is worth noting that, in contrast to Result \ref{result:geometric} which is proved for fixed dimension $n$, Theorem \ref{th:main} is for fixed radius $r$ and arbitrary $n$.

It is straightforward to show that $|S(n,2)|=2n^2+2n+1$. According to \cite[Theorem 28]{szegedy_algorithms_1998} (see \cite[Exampe 2]{kari_algebraic_2015} for an alternative proof), when $2n^2+2n+1$ is a prime, a tiling of $\Z^n$ by $S(n,2)$ must be a lattice tiling. Thus Theorem \ref{th:main} implies the following result.
\begin{corollary}
	For $r=2$ and $n\geq 3$ satisfying that $2n^2+2n+1$ is prime, the Golomb-Welch conjecture is true.
\end{corollary}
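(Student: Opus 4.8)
The plan is to deduce this from Theorem~\ref{th:main} together with the rigidity of tilings by a tile whose cardinality is prime. The point to keep in mind is that the Golomb--Welch conjecture (Conjecture~\ref{conj:GW}) for $r=2$ asserts that $\Z^n$ cannot be tiled by $S(n,2)$ \emph{at all}, whereas Theorem~\ref{th:main} rules out only \emph{lattice} tilings; so the task is to bridge this gap, and this is exactly where the primality hypothesis on $2n^2+2n+1$ is used.

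First I would record the elementary count $|S(n,2)|=2n^2+2n+1$, so that by hypothesis the tile $S(n,2)$ has prime cardinality. Next I would invoke the cited theorem of Szegedy, \cite[Theorem~28]{szegedy_algorithms_1998} (see also \cite[Example~2]{kari_algebraic_2015}): any tiling of $\Z^n$ by translates of a finite tile of prime cardinality is necessarily a lattice tiling, i.e.\ the set of translation vectors may be taken to be a sublattice of $\Z^n$. Applied to the tile $S(n,2)$, this forces every tiling of $\Z^n$ by $S(n,2)$ to be a lattice tiling. But Theorem~\ref{th:main} asserts that, for every $n\geq 3$, no lattice tiling of $\Z^n$ by $S(n,2)$ exists. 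Combining the two statements, for such $n$ there is no tiling of $\Z^n$ by $S(n,2)$ whatsoever; equivalently, there is no perfect $2$-error-correcting Lee code in $\Z^n$, which is precisely the assertion of Conjecture~\ref{conj:GW} in the case $r=2$.

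At the level of the corollary there is essentially no obstacle: all of the difficulty is absorbed into Theorem~\ref{th:main} (the main result, proved in the body of the paper) and into the already-established prime-cardinality rigidity result of Szegedy and Kari. The only point that needs checking is that the hypothesis of the latter is met, namely that the tile in question has prime size, and this holds by the standing assumption that $2n^2+2n+1$ is prime. No further computation is required.
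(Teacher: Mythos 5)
Your argument is exactly the paper's: compute $|S(n,2)|=2n^2+2n+1$, invoke Szegedy's rigidity theorem (that a tiling of $\Z^n$ by a tile of prime cardinality must be a lattice tiling) to reduce to the lattice case, and then apply Theorem~\ref{th:main}. This matches the paper's derivation of the corollary, so there is nothing to add.
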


Our result also answers an important question in graph theory. The \emph{degree-diameter} problem is to determine the largest graph of given maximum degree $d$ and diameter $k$. For the general case, the famous \emph{Moore bound} is an upper bound for the orders of such graphs. Except for $k = 1$ or $d \leq 2$, graphs achieving the Moore bound are only possible for $d = 3, 7, 57$ and $k = 2$; see \cite{bannai_1973_moore}~\cite{damerell_moore_1973} and \cite{hoffman_moore_1960}.

Let $G$ be a multiplicative group with the identity element $e$ and $S\subseteq G$ such that $S^{-1}=S$ and $e\not\in S$. Here $S^{-1}=\{s^{-1}: s\in S\}$. The (undirected) \emph{Cayley graph} $\Gamma(G,S)$ has a vertex set $G$, and two distinct vertices $g,h$ are adjacent if and only if $g^{-1}h\in S$. In particular, when $G$ is abelian, we call $\Gamma(G,S)$ an \emph{abelian Cayley graph}. 

Let $AC(d,k)$ denote the largest order of abelian Cayley graphs of degree $d$ and diameter $k$. In \cite{dougherty_degree-diameter_2004}, an upper bound for $AC(2n,r)$ is obtained which actually equals 
\[|S(n,r)|=\sum_{i=0}^{\text{min}\{n,r\}}2^{i}\binom {n}{i}\binom {r}{i}.\]
This value is often called the \emph{abelian Cayley Moore bound}.
An important open question in graph theory is whether there exists an abelian graph whose order meets this bound. For more details about the degree-diameter problems, we refer to the survey \cite{miller_moore_2013}.

By checking the proof of the upper bound for $AC(2n,r)$ in \cite{dougherty_degree-diameter_2004}, it is not difficult to see that an abelian Cayley graph of degree $2n$ and diameter $r$ achieves this upper bound if and only if there is a lattice tiling of $\Z^n$ by $S(n,r)$; see \cite[Section 2.1]{zhang_nonexistence_2019} for the detail. This link is also pointed out in \cite{camarero_quasi-perfect_lee_2016}. Hence, Theorem \ref{th:main} is equivalent to the following statement.
\begin{corollary}
	The number of vertices in any abelian Cayley graph of diameter $2$ and even degree $d\geq 6$ is strictly less than the abelian Cayley Moore bound.
\end{corollary}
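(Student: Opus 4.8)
The final corollary is, via the equivalence recorded just above it (from \cite{dougherty_degree-diameter_2004} and \cite[Section~2.1]{zhang_nonexistence_2019}), exactly the $r=2$, $n\ge 3$ instance of Theorem~\ref{th:main} — an even degree $d=2n\ge 6$ means $n\ge 3$ — so I outline how I would prove Theorem~\ref{th:main} itself. \emph{Step 1 (a group ring identity).} If a lattice tiling with lattice $C\le\Z^n$ exists, then $G:=\Z^n/C$ is an abelian group of order $N:=2n^2+2n+1=|S(n,2)|$, generated by the images $g_1,\dots,g_n$ of $e_1,\dots,e_n$; writing $\phi\colon\Z^n\to G$ for the projection, the tiling hypothesis is precisely that $\phi|_{S(n,2)}$ is a bijection onto $G$. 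Using $S(n,2)=S(n,1)+S(n,1)$ (as a set) and the injectivity of $\phi$ on $S(n,1)$, this is equivalent, in $\Z[G]$, to
\[
(1+A)^2+D = 2\sum_{h\in G}h + (2n-1)e ,
\]
with $A=\sum_{i=1}^n(g_i+g_i^{-1})$, $D=\sum_{i=1}^n(g_i^2+g_i^{-2})$, $e$ the identity of $G$, and $1+A$ the $\{0,1\}$-valued element of weight $2n+1$ supported on $\phi(S(n,1))$.

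\emph{Step 2 (characters and a sharp bound).} For a character $\chi$ set $c_\chi:=\chi(1+A)=1+\sum_{i=1}^n\bigl(\chi(g_i)+\overline{\chi(g_i)}\bigr)\in\R$. Applying $\chi$ to the identity, using $\chi(D)=\chi^2(A)$ and that $N$ odd makes $\chi\mapsto\chi^2$ permute the nontrivial characters, gives
\[
c_{\chi^2}=2n-c_\chi^{\,2}\qquad(\chi\ne 1).
\]
From the crude bound $c_\psi\ge 1-2n$, iterating this relation confines every $c_\psi$ ($\psi\ne 1$) to $[-\beta,\beta]$, where $\beta:=\tfrac12\bigl(1+\sqrt{8n+1}\bigr)$ is the positive root of $x^2-x-2n=0$. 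Moreover $\sum_\chi c_\chi=N$ and $\sum_\chi c_\chi^{\,2}=(2n+1)N$ (Parseval, since $1+A$ is $\{0,1\}$-valued of weight $2n+1$); in particular the mean of $c_\chi^{\,2}$ over $\chi\ne 1$ is $2n-1+\tfrac1{n+1}$, only just below the maximum $\beta^2=2n+\beta$, so the interval bound alone will not finish the proof.

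\emph{Step 3 (arithmetic at a prime divisor).} Since $2N-1=(2n+1)^2$, every prime $p\mid N$ makes $-1$ a square mod $p$, hence $p\equiv 1\pmod 4$; in particular $3\nmid N$ and $p\ge 5$. Fix $\chi$ of prime order $p$. Then $w:=c_\chi$ lies in $\Z[\zeta_p+\zeta_p^{-1}]$, its conjugates are (a subset of) the values $c_{\chi^j}$, the relation above reads $\sigma_2(w)=2n-w^2$ with $\sigma_2\colon\zeta_p\mapsto\zeta_p^2$, and $w=1+2t_0+\sum_{k=1}^{(p-1)/2}t_k(\zeta_p^k+\zeta_p^{-k})$ for nonnegative integers $t_k$ with $\sum_k t_k=n$ and $t_0=\#\{i:\chi(g_i)=1\}$. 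Thus the conjugate set of $w$ is a union of cycles of $T(x)=2n-x^2$ lying in $[-\beta,\beta]$; since $T\equiv 2n\gg\beta$ near $0$, such a cycle must avoid a neighbourhood of $0$, hence concentrates near $\pm\beta$, while $\Tr(w)=\tfrac{p-1}{2}+pt_0-n$ and $\Tr(w^2)=n(p-1)-\Tr(w)$ are completely determined by $t_0$ and $p$.

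\emph{Step 4 (closing the argument, and the expected obstacle).} The plan is to pit these local data — the very restricted shape of $T$-cycles of algebraic integers contained in $[-\beta,\beta]$ (equivalently, monic integer polynomials $m$ with $m(u)m(-u)=m(2n-u^2)$ whose roots all lie there), the fixed values of $\Tr(w)$ and $\Tr(w^2)$, and congruences such as $\sigma_2(w)\equiv w\pmod{(1-\zeta_p)}$ — against the global identity $\sum_\chi c_\chi^{\,2}=(2n+1)N$, excluding every admissible configuration and hence the tiling. I expect the crux to be making this uniform over all $n$: one must eliminate \emph{every} prime $p\equiv 1\pmod 4$ dividing $N$, which will likely need a case split governed by the size of $p$ relative to $n$ (for $p$ small, the rigid value of $\Tr(w)$ is incompatible with $\sum_\chi c_\chi^{\,2}=(2n+1)N$; for $p$ large, a $T$-cycle of length $\mathrm{ord}_p(2)$ in $(\Z/p\Z)^\times/\{\pm 1\}$ cannot fit in the thin admissible region near $\pm\beta$), together with a separate treatment of the case where $G$ is not cyclic, carried out inside a suitable Sylow component of $G$.
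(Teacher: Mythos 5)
Your reduction of the corollary to Theorem \ref{th:main} is exactly what the paper does: an abelian Cayley graph of even degree $2n\ge 6$ and diameter $2$ meets the abelian Cayley Moore bound if and only if there is a lattice tiling of $\Z^n$ by $S(n,2)$, so the corollary is the $r=2$, $n\ge 3$ case of the main theorem. Your Steps 1--2 are also sound and reproduce the standard translation (Lemma \ref{lm:group_ring}, i.e.\ $T^2=2G-T^{(2)}+2n$ with $T=1+A$, $T^{(2)}=e+D$) and the character recursion $c_{\chi^2}=2n-c_\chi^2$ with the resulting confinement of nontrivial character values to $[-\beta,\beta]$, $\beta=\tfrac12(1+\sqrt{8n+1})$.

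The genuine gap is Step 4: it is a plan, not an argument, and you say so yourself (``I expect the crux to be making this uniform over all $n$''). That crux is precisely where this route has historically stopped. The character-theoretic/algebraic-number-theoretic attack you describe is essentially the method of \cite{zhang_nonexistence_2019}, and there it yields nonexistence only for infinitely many $n$ (under arithmetic side conditions on the primes dividing $2n^2+2n+1$), not for all $n$; nothing in your sketch supplies the new ingredient needed to eliminate every prime $p\equiv 1\pmod 4$ dividing $|G|$ uniformly, nor the non-cyclic Sylow cases. The paper closes the argument by a different mechanism that never isolates individual character values: it studies the coefficient distribution of $T^{(2)}T=\sum i X_i$ and $T^{(4)}T=\sum i Y_i$, obtains a \emph{third} linear relation on the $|X_i|$ (resp.\ $|Y_i|$) by an inclusion--exclusion count of $|\bigcup a_iT|$ using the fact that any two translates $a_iT$, $a_jT$ meet in at most $2$ points (Lemma \ref{lm:TmeetT2}), and then reduces \eqref{eq:T^2T} and \eqref{eq:T^4T} modulo $3$ and $5$. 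Case analysis on $n\bmod 15$ pins down the $|X_i|$ exactly (Lemmas \ref{lm:n=1mod3}, \ref{lm:n=2mod3}) and produces quadratic inequalities forcing $n$ below an absolute bound, after which Lemma \ref{lm:list<=100} finishes. To make your proposal into a proof you would either have to carry out Step 4 in full --- which is a substantial open-ended task, not a routine verification --- or switch to an argument of the paper's combinatorial type.
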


The rest of this paper is organized as follows: In Section \ref{sec:pre}, we introduce the group ring conditions for the existence of a lattice tiling of $\Z^n$ by $S(n,2)$. In Section \ref{sec:main}, we prove Theorem \ref{th:main}.

\section{Preliminaries}\label{sec:pre}
Let $\Z[G]$ denote the set of formal sums $\sum_{g\in G} a_g g$, where $a_g\in \Z$ and $G$ is any (not necessarily abelian) group which we write here multiplicatively. The addition of elements in $\Z[G]$ is defined componentwise, i.e.
$$\sum_{g\in G} a_g g +\sum_{g\in G} b_g g :=\sum_{g\in G} (a_g+b_g) g.$$
The multiplication is defined by
$$(\sum_{g\in G} a_g g )\cdot (\sum_{g\in G} b_g g) :=\sum_{g\in G} (\sum_{h\in G} a_hb_{h^{-1}g})\cdot g.$$
Moreover,
$$\lambda \cdot(\sum_{g\in G} a_g g ):= \sum_{g\in G} (\lambda a_g) g $$
for $\lambda\in\Z$. For $A=\sum_{g\in G} a_g g$ and $t\in \Z$, we define 
$$A^{(t)}:=\sum_{g\in G} a_g g^t.$$
For any set $A$ whose elements belong to $G$ ($A$ may be a multiset), we can identify $A$ with the group ring element $\sum_{g\in G} a_g g$
where $a_g$ is the multiplicity of $g$ appearing in $A$. Moreover, we use $|A|$ to denote the number of distinct elements in $A$, rather than the counting of elements with multiplicity.

The existence of a lattice tiling of $\Z^n$ by $S(n,2)$ can be equivalently given by a collection of group ring equations.
\begin{lemma}[\cite{zhang_nonexistence_2019}]\label{lm:group_ring}
	Let $n\ge2$. There exists a lattice tiling of $\Z^n$ by $S(n,2)$ if and only if there exists a finite abelian group $G$ of order $2n^{2}+2n+1$ and a subset $T$ of size $2n+1$ viewed as an element in $\mathbb{Z}[G]$ satisfying
	\begin{enumerate}[label=(\alph*)]
		\item\label{cond:1} the identity element $e$ belongs to $T$,
		\item\label{cond:2} $T=T^{(-1)}$,
		\item\label{cond:3} $T^{2}= 2G-T^{(2)} +2n$.
	\end{enumerate}
\end{lemma}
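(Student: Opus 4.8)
The plan is to translate a lattice tiling into the quotient homomorphism it induces, and to isolate a single identity in the group ring $\Z[\Z^n]$ that is responsible for condition \ref{cond:3}. Write the group operation of $\Z^n$ additively, so that its identity is the origin $\mathbf 0$, and regard $S(n,1)$ and $S(n,2)$ as elements of $\Z[\Z^n]$. First I would establish the identity
\[
S(n,1)^{2}=2\,S(n,2)-S(n,1)^{(2)}+2n\cdot\mathbf 0,
\]
where $S(n,1)^{(2)}=\sum_{s\in S(n,1)}(s+s)$ is the image of $S(n,1)$ under the $(2)$-power map of $\Z^n$. This is checked by a direct expansion: for each type of lattice point ($\mathbf 0$; $\pm e_i$; $\pm 2e_i$; $\pm e_i\pm e_j$ with $i\ne j$) one counts the ordered pairs $(s,s')$ of elements of $S(n,1)$ with $s+s'$ equal to that point and compares with the right-hand side coefficient by coefficient --- for instance the origin occurs $1+2n$ times on the left, while on the right it has coefficient $2-1+2n$. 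This identity drives both implications.

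For the forward direction, let $\Lambda\le\Z^n$ be a lattice with $\{S(n,2)+\lambda:\lambda\in\Lambda\}$ a partition of $\Z^n$, and let $\phi\colon\Z^n\to G:=\Z^n/\Lambda$ be the quotient map. The partition condition says precisely that $\phi$ restricts to a bijection of $S(n,2)$ onto $G$; in particular $G$ is finite of order $|S(n,2)|=2n^{2}+2n+1$, and $\phi(S(n,2))=G$ as an element of $\Z[G]$. Since $S(n,1)\subseteq S(n,2)$, the restriction of $\phi$ to $S(n,1)$ is injective, so $T:=\phi(S(n,1))$ is a subset of $G$ of size $2n+1$, and \ref{cond:1}, \ref{cond:2} are immediate from $\mathbf 0\in S(n,1)$ and $S(n,1)=-S(n,1)$. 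Applying the ring homomorphism $\Z[\Z^n]\to\Z[G]$ induced by $\phi$ to the displayed identity now gives \ref{cond:3}: since $\phi$ is a group homomorphism it carries $S(n,1)^{(2)}$ to $T^{(2)}$; it carries $S(n,2)$ to $G$; and it sends $\mathbf 0$ to $e$, so the identity becomes $T^{2}=2G-T^{(2)}+2n$.

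For the converse, let $G$ and $T$ be as in the statement. Here I would use that $|G|=2n^{2}+2n+1$ is odd, so $G$ has no element of order $2$; together with $e\in T$, $T=T^{(-1)}$ and $|T|=2n+1$, this lets me list $T=\{e,g_{1},g_{1}^{-1},\dots,g_{n},g_{n}^{-1}\}$ with all $2n+1$ entries distinct. Let $\psi\colon\Z^n\to G$ be the homomorphism with $\psi(e_i)=g_i$ (well defined as $\Z^n$ is free abelian) and put $\Lambda=\ker\psi$, a finite-index subgroup of $\Z^n$ and hence a lattice. Then $\psi(S(n,1))=e+\sum_{i}(g_i+g_i^{-1})=T$ and likewise $\psi(S(n,1)^{(2)})=T^{(2)}$ in $\Z[G]$, so applying $\psi$ to the displayed identity yields $T^{2}=2\,\psi(S(n,2))-T^{(2)}+2n$; comparing with \ref{cond:3} forces $\psi(S(n,2))=G$ in $\Z[G]$. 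Since both sides have coefficient sum $2n^{2}+2n+1=|G|$ and every coefficient of $G$ equals $1$, this means $\psi$ restricts to a bijection of $S(n,2)$ onto $G$, and a short coset argument then shows $\{S(n,2)+\lambda:\lambda\in\Lambda\}$ partitions $\Z^n$ (distinct translates can meet only if $\psi(s)=\psi(s')$ for $s\ne s'$ in $S(n,2)$, and each $x\in\Z^n$ lies in $S(n,2)+(x-s)$ for the unique $s\in S(n,2)$ with $\psi(s)=\psi(x)$).

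I expect the only genuine content to be the group-ring identity relating $S(n,1)^{2}$, $S(n,2)$ and $S(n,1)^{(2)}$; once it is in hand, both directions are a formal transport along an induced ring homomorphism. The one step requiring care is the parity observation in the converse --- the odd order of $G$ is exactly what rules out involutions in $T$ and so supplies the pairing $T=\{e\}\cup\{g_i^{\pm 1}\}$, hence the homomorphism $\psi$ that rebuilds the lattice from $T$.
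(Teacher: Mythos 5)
Your proof is correct, and it follows the same route as the source the paper cites for this lemma (the paper itself gives no proof, only the reference to Zhang--Zhou): the group-ring identity $S(n,1)^2=2S(n,2)-S(n,1)^{(2)}+2n\cdot\mathbf 0$ in $\Z[\Z^n]$, transported along the quotient map $\Z^n\to\Z^n/\Lambda$ in one direction and along the homomorphism $e_i\mapsto g_i$ rebuilt from the pairing $T=\{e\}\cup\{g_i^{\pm1}\}$ (valid because $|G|$ is odd) in the other. All the individual steps check out, including the coefficient count for the identity and the equivalence between ``$\psi(S(n,2))=G$ in $\Z[G]$'' and the tiling property.
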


We also need the following nonexistence results summarized in \cite{zhang_nonexistence_2019}.
\begin{lemma}\label{lm:list<=100}
	For $3\leq n\leq 100$, there is no lattice tiling of $\Z^n$ by $S(n,2)$ except possibly for $n=16, 21, 36, 55, 64, 66, 78, 92$. 
\end{lemma}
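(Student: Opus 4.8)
The plan is to obtain this lemma by assembling the nonexistence criteria already available in the literature and then carrying out a finite case analysis over $3\le n\le 100$. By Lemma~\ref{lm:group_ring}, a lattice tiling of $\Z^n$ by $S(n,2)$ would yield an abelian group $G$ with $|G|=N_n:=2n^2+2n+1$ and a subset $T\subseteq G$ of size $2n+1$ obeying conditions (a)--(c); the goal is to exclude this for the relevant $n$. I would first invoke \cite{horak_new_2014}, which settles all $n\le 12$. For $13\le n\le 100$ I would test $N_n$ against the criteria of \cite{kim_2017_nonexistence} and \cite{zhang_nonexistence_2019}: Kim's symmetric-polynomial obstruction disposes of every $n$ for which $N_n$ is prime (so that $G$ is forced to be cyclic of prime order), while the character-sum and cyclotomic-integer analysis of \cite{zhang_nonexistence_2019} disposes of a large further family of composite $N_n$ -- those whose prime factorization and the multiplicative orders $\mathrm{ord}_p(2)$ of $2$ modulo the prime divisors $p\mid N_n$ meet the structural hypotheses proved there (which preclude admissible solutions of the group-ring equations obtained by applying characters to (c)).

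The core step is then the purely finite verification that, as $n$ runs over $3\le n\le 100$, the union of these criteria leaves untreated exactly $n\in\{16,21,36,55,64,66,78,92\}$. I would carry this out by factoring each $N_n=n^2+(n+1)^2$ -- a product of a handful of primes, all congruent to $1$ modulo $4$ since $\gcd(n,n+1)=1$ -- recording for each $n$ the data that the cited theorems depend on (whether $N_n$ is a prime power, the value of $\mathrm{ord}_p(2)$ for each prime factor $p$, the isomorphism types of abelian groups of order $N_n$, and so on), and matching this data against their hypotheses. Any $n$ in the range that still slipped through would be handled by a direct terminating search: for each of the (very few) abelian groups $G$ of order $N_n$, one exhaustively inspects the subsets $T$ with $e\in T$, $|T|=2n+1$ and $T=T^{(-1)}$, and checks whether (c) can hold.

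Honestly, there is no deep obstacle here: this is a bookkeeping statement whose only role is to reduce Theorem~\ref{th:main} to the range $n>100$ together with the eight exceptional values. The single point that must be handled with care is exhaustiveness of the case split -- in particular, confirming that for those eight values of $n$ the integer $N_n$ is composite but not a prime power (so Kim's method does not apply) and that the combination of orders $\mathrm{ord}_p(2)$ over its prime divisors evades every hypothesis of \cite{zhang_nonexistence_2019}; this is precisely the gap that the new, uniform argument of Section~\ref{sec:main} is designed to close.
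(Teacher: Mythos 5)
The first thing to say is that the paper does not prove this lemma at all: it is imported as a summary of the nonexistence results already established in \cite{zhang_nonexistence_2019}, and that reference proceeds essentially as you describe --- the computational result of \cite{horak_new_2014} for $n\le 12$, the polynomial criteria of \cite{kim_2017_nonexistence} and \cite{qureshi_nonexistence_ZGcondition_arxiv} for suitable prime values of $2n^2+2n+1$, the character-sum and algebraic-number-theoretic criteria of \cite{zhang_nonexistence_2019} itself, and a finite tabulation over the range. So your overall plan is the right one and matches the actual source of the lemma; the correct move inside this paper is simply the citation.

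Two of your specific claims would, however, fail if you tried to execute the plan as written. First, Kim's obstruction does \emph{not} dispose of every $n$ with $2n^2+2n+1$ prime; it applies only to a certain subclass of such $n$ (the introduction says exactly this: ``a certain class of $n$ satisfying that $|S(n,2)|$ is a prime''), and indeed if primality alone sufficed, the paper's corollary settling the Golomb--Welch conjecture for prime $2n^2+2n+1$ would not be new. Second, your fallback of exhaustively inspecting all symmetric subsets $T$ with $e\in T$ and $|T|=2n+1$ is not a terminating computation in practice: since $|G|$ is odd, such a $T$ consists of $e$ together with $n$ inverse-pairs chosen from $n^2+n$ pairs, i.e.\ on the order of $\binom{n^2+n}{n}$ candidates, which is already around $10^{18}$ for $n=13$ and utterly out of reach for larger $n$. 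This is precisely why the eight listed values survive every known criterion and remain as genuine exceptions in Lemma~\ref{lm:list<=100}, to be eliminated only by the uniform argument of Section~\ref{sec:main}; they cannot be swept up by a brute-force search, and your closing paragraph correctly identifies that this is the gap the rest of the paper exists to close.
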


\section{Proof of the main result}\label{sec:main}
Our objective is to show the nonexistence of $T$ satisfying Conditions \ref{cond:1}--\ref{cond:3} in Lemma \ref{lm:group_ring}. 
To do so, we do assume such $T$ exists and try to deduce some necessary consequences. 

The outline of our proof of Theorem \ref{th:main} is as follows: we first investigate $T^{(2)}T \pmod{3}$, which provides us some strong restrictions on the multiplicities of elements in $T^{(2)}T$. In particular, it leads to a proof of Theorem \ref{th:main} when $n\equiv 0 \pmod{3}$; see Proposition \ref{prop:n=0mod3}. Then we further look at $T^{(4)}T\pmod{5}$. For each of the rest $10$ possible value of $n$ modulo $15$, we can get a contradiction.

First, by using Condition \ref{cond:3}, we immediately obtain the following:

\begin{lemma}\label{lm:TmeetT2}
For any $g\in G\setminus \{e\}$, 
\[ |\{ (t_1,t_2)\in T\times T: t_1t_2=g \}|=\begin{cases}
		1, & \mbox{ if } g\in T^{(2)};\\
		2, & \mbox{ if } g\in G\backslash T^{(2)}
	\end{cases}.\]
In particular, $T\cap T^{(2)}=\{e\}$. Moreover, if $t, t_1,t_2\in T$, then $t_1t_2=t^2$ if and only if $t_1=t_2=t$. 
\end{lemma}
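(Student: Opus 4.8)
The plan is to obtain the whole statement by comparing coefficients in Condition~\ref{cond:3}, after recording one structural fact about $G$ that gets reused. Since $|G|=2n^{2}+2n+1$ is odd, the squaring map $g\mapsto g^{2}$ is a bijection of $G$; in particular it is injective on $T$, so the multiset $\{t^{2}:t\in T\}$ has no repeated entries. Equivalently, in $\Z[G]$ the element $T^{(2)}$ has every coefficient in $\{0,1\}$: for each $g\in G$ the coefficient of $g$ in $T^{(2)}$ is $1$ if $g\in T^{(2)}$ and $0$ otherwise (and $|T^{(2)}|=|T|=2n+1$). I would put this as the first line of the proof.

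For the dichotomy, I would use that, by the definition of multiplication in $\Z[G]$, the number $|\{(t_{1},t_{2})\in T\times T:t_{1}t_{2}=g\}|$ is exactly the coefficient of $g$ in $T^{2}=T\cdot T$. So fix $g\neq e$ and compare the coefficients of $g$ on the two sides of $T^{2}=2G-T^{(2)}+2n$: the coefficient of $g$ in $2G$ is $2$, that in $2n\cdot e$ is $0$, and by the first step that in $T^{(2)}$ is $1$ or $0$ according as $g\in T^{(2)}$ or not. Hence the coefficient of $g$ in $T^{2}$ is $2-1=1$ when $g\in T^{(2)}$ and $2-0=2$ otherwise, which is exactly the claim. (Nonnegativity of this coefficient is automatic; the role of oddness of $|G|$ is precisely to rule out a coefficient $2$ in $T^{(2)}$, which would otherwise force a third case, namely coefficient $0$.)

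The remaining assertions are immediate corollaries. For $T\cap T^{(2)}=\{e\}$: the inclusion $\supseteq$ holds since $e\in T$ by Condition~\ref{cond:1} and $e=e^{2}\in T^{(2)}$; conversely, if $g\in T\cap T^{(2)}$ with $g\neq e$, then $g,e\in T$ and $g\neq e$ make $(g,e)$ and $(e,g)$ two distinct pairs with product $g$, forcing the coefficient of $g$ in $T^{2}$ to be $\geq2$, contradicting the dichotomy (which gives $1$, since $g\in T^{(2)}$ and $g\neq e$). For the ``moreover'', let $t,t_{1},t_{2}\in T$ with $t_{1}t_{2}=t^{2}$; the ``if'' direction is trivial, and for ``only if'' I would take $t\neq e$ (when $t=e$ one has $t^{2}=e$ and, by Condition~\ref{cond:2}, every pair $(t_{1},t_{1}^{-1})$ with $t_{1}\in T$ works, so the statement is the $t\neq e$ one): then $t^{2}\neq e$ by oddness and $t^{2}\in T^{(2)}$, so the coefficient of $t^{2}$ in $T^{2}$ equals $1$, and since $(t,t)$ already realizes $t^{2}$ we conclude $(t_{1},t_{2})=(t,t)$.

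I do not expect a genuine obstacle here: the lemma is essentially a one-line coefficient comparison, and the only care required is the bookkeeping noted above (that $(g,e)\neq(e,g)$ uses $g\neq e$, and that the ``moreover'' is to be read with $t\neq e$). The single conceptual point worth foregrounding in the write-up is the oddness of $|G|$, since it is what collapses what could be a three-case count into the clean two-case statement and is invoked in all three parts of the argument.
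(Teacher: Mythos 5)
Your proof is correct and follows exactly the route the paper intends: the paper states this lemma as an immediate consequence of comparing coefficients in Condition (c), $T^{2}=2G-T^{(2)}+2n$, which is precisely your argument, including the use of the oddness of $|G|$ to see that $T^{(2)}$ has $0/1$ coefficients. Your remark that the ``moreover'' clause must be read with $t\neq e$ is a fair and accurate observation about the statement.
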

\medskip

Observe that by \ref{cond:3}, $	T^3 = 2(2n+1)G-T^{(2)}T+2nT$ which means
\begin{equation}\label{eq:T^2T}
	T^{(2)}T = 2(2n+1)G-T^3+2nT.
\end{equation}
Our strategy is to exploit the above equation. For convenience, we keep the following notation through this section. We write
\[ T^{(2)}T = \sum_{i=0}^{N} iX_i\]
where $\{X_i: i=0,1,\dots, N\}$ forms a partition of $G$. It is easy to deduce the following:
\begin{equation}\label{eq:main_2n^2+2n+1}
	2n^2+2n+1 = \sum_{i=0}^{N} |X_i|,
\end{equation}
and 
\begin{equation}\label{eq:main_(2n+1)^2}
	(2n+1)^2 = \sum_{i=1}^N i|X_i|. 
\end{equation}
Note that $|G|$ is odd and $|T^{(2)}|=2n+1$. Moreover, as $T^{(2)}\cap T=\{e\}$, it follows that $e\in X_1$. Besides the above two equations on $|X_i|$'s, we derive another equation based on the inclusion-exclusion principle as follows:

\begin{lemma}\label{lm:T^2T}
	\begin{equation}\label{eq:main_T2T}
		\sum_{i=1}^N|X_i| = 4n+1 + \sum_{s=3}^{N}\frac{(s-1)(s-2)}{2}|X_s|.
	\end{equation}
\end{lemma}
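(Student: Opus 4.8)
The plan is to read the displayed identity as inclusion--exclusion applied to the support of $T^{(2)}T$. Write $m_g$ for the coefficient of $g$ in $T^{(2)}T$; since for each $t\in T$ there is at most one $s\in T$ with $t^{2}s=g$, we have $m_g=\#\{t\in T:g\in t^{2}T\}$, so $T^{(2)}T$ is the disjoint sum, with multiplicity, of the $2n+1$ translates $t^{2}T$ ($t\in T$), each of which has exactly $2n+1$ elements. For any $g$ with $m_g\ge 1$ one has the elementary identity $1-m_g+\binom{m_g}{2}=\binom{m_g-1}{2}$ (both sides vanish when $m_g\le 2$), and summing it over the support of $T^{(2)}T$ yields
\[ \sum_{i=1}^{N}|X_i|=\sum_{t\in T}|t^{2}T|-\sum_{\{t,t'\}}|t^{2}T\cap t'^{2}T|+\sum_{s=3}^{N}\binom{s-1}{2}|X_s|, \]
where $\{t,t'\}$ runs over the $2$-subsets of $T$. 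Since $\sum_{t\in T}|t^{2}T|=(2n+1)^{2}$ and $(2n+1)^{2}-4n^{2}=4n+1$, the lemma reduces to the claim $\sum_{\{t,t'\}}|t^{2}T\cap t'^{2}T|=4n^{2}$.

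To evaluate this sum I would pass to the group ring. The number of ordered quadruples $(t,s,t',s')\in T^{4}$ with $t^{2}s=t'^{2}s'$ is $\sum_{g}m_g^{2}=\sum_{i=1}^{N}i^{2}|X_i|$; the diagonal $t=t'$ accounts for exactly $(2n+1)^{2}$ of them, so $2\sum_{\{t,t'\}}|t^{2}T\cap t'^{2}T|=\sum_{i=1}^{N}i^{2}|X_i|-(2n+1)^{2}$. Thus the whole lemma is equivalent to the single relation $\sum_{i=1}^{N}i^{2}|X_i|=12n^{2}+4n+1$ (this equivalence is also immediate from \eqref{eq:main_2n^2+2n+1} and \eqref{eq:main_(2n+1)^2}, so one could in fact bypass the inclusion--exclusion step). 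Now $\sum_{g}m_g^{2}$ is the coefficient of $e$ in $(T^{(2)}T)(T^{(2)}T)^{(-1)}$, and because $T=T^{(-1)}$ and $G$ is abelian we have $(T^{(2)}T)^{(-1)}=T^{(-2)}T^{(-1)}=T^{(2)}T$, so $\sum_{g}m_g^{2}$ is the coefficient of $e$ in $(T^{(2)}T)^{2}=(T^{(2)})^{2}T^{2}=(T^{2})^{(2)}T^{2}$, using $(T^{(2)})^{2}=(T^{2})^{(2)}$.

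Here Condition \ref{cond:3} enters twice: $T^{2}=2G-T^{(2)}+2n$, and applying $(\cdot)^{(2)}$ (legitimate since $x\mapsto x^{2}$ is a bijection of the odd-order group $G$) also $(T^{2})^{(2)}=2G-T^{(4)}+2n$. Expanding the product, with $G^{2}=(2n^{2}+2n+1)G$ and $GT^{(2)}=GT^{(4)}=(2n+1)G$, and reading off the coefficient of $e$ --- for which the only nonroutine inputs are that $T^{(2)}$, $T^{(4)}$ and $T^{(4)}T^{(2)}$ each have $e$-coefficient $1$, the last because $a^{4}b^{2}=e$ forces $(a^{-1})^{2}=a^{-2}\in T\cap T^{(2)}=\{e\}$ by Lemma \ref{lm:TmeetT2} together with $|G|$ being odd --- one obtains $\sum_{g}m_g^{2}=12n^{2}+4n+1$. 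Hence $\sum_{\{t,t'\}}|t^{2}T\cap t'^{2}T|=\tfrac12\big(12n^{2}+4n+1-(2n+1)^{2}\big)=4n^{2}$, which proves the lemma. The main obstacle is not conceptual but a matter of bookkeeping discipline: one must correctly justify $(T^{(2)}T)^{(-1)}=T^{(2)}T$ and that $T^{(4)}T^{(2)}$ has $e$-coefficient $1$, and then expand $(T^{2})^{(2)}T^{2}$ without arithmetic slips, since everything hinges on landing exactly on the constant $12n^{2}+4n+1$.
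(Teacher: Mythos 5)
Your proof is correct, and the decisive step is genuinely different from the paper's. Both arguments start from the same inclusion--exclusion over the $2n+1$ translates $t^2T$, and both need the pairwise term to come out to $4n^2$. The paper gets this by evaluating each intersection individually: writing $T^{(2)}=\sum_i a_i$, it shows $|a_iT\cap a_jT|=1$ when $i=0<j$ and $=2$ when $0<i<j$, using Lemma \ref{lm:TmeetT2} together with the observation that $T^{(2)}$ itself satisfies Condition \ref{cond:3}. You instead compute only the aggregate $\sum_{\{t,t'\}}|t^2T\cap t'^2T|$ via the second moment $\sum_g m_g^2=[e](T^{(2)}T)^2=[e]\bigl((T^2)^{(2)}T^2\bigr)$, substituting Condition \ref{cond:3} twice and landing on $12n^2+4n+1$; I checked the expansion (the coefficient of $G$ is $8n^2+8n$, and the $e$-coefficients of $T^{(2)}$, $T^{(4)}$, $T^{(4)}T^{(2)}$ are each $1$, the last by your $T\cap T^{(2)}=\{e\}$ argument) and it is right. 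Your side remark is also correct and worth emphasizing: since $\sum_s\frac{(s-1)(s-2)}{2}|X_s|=\frac12\bigl(\sum_s s^2|X_s|-3(2n+1)^2+2\sum_{s\ge1}|X_s|\bigr)$, the lemma is literally equivalent (given \eqref{eq:main_(2n+1)^2}) to the single identity $\sum_s s^2|X_s|=12n^2+4n+1$, so the inclusion--exclusion scaffolding can be dispensed with entirely. The trade-off is that the paper's pointwise intersection values are exactly what gets reused (in weakened, inequality form) in the subsequent lemma on $T^{(4)}T$, where the analogous second moment would absorb the unknown defect $\delta$; for the present lemma taken on its own, your route is cleaner and avoids the case analysis.
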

\begin{proof}
	By \ref{cond:1} and \ref{cond:2}, we can write $T^{(2)}= \sum_{i=0}^{2n}a_i$ with $a_0=e$ and $a_{i}^{-1}=a_{2n+1-i}$ for $i=1,2,\dots, 2n$. Clearly all the $a_i$'s are distinct from each other and
	\begin{equation}\label{eq:T2T_partition}
		T^{(2)}T=\sum_{i=0}^{2n}a_iT.
	\end{equation}
	First, we prove the following claim.
	
	\textbf{Claim 1.} For $0\leq i<j\leq 2n$, $|a_iT\cap a_jT|=
	\begin{cases}
		1, & 0=i<j;\\
		2, & 0<i<j.
	\end{cases}$

	Observe that $a_it=a_jt'\in a_iT\cap a_jT$ if and only if $a_i a_j^{-1}=t^{-1}t'$ for some $t,t'\in T$. Recall that $T^{(2)}$ also satisfies Condition \ref{cond:3} in Lemma \ref{lm:group_ring}. Hence, $a_i a_j^{-1}\notin T^{(2)}$ if and only if $i\neq 0$.

	If $a_ia_j^{-1} \in G\setminus T^{(2)}$, then by Lemma \ref{lm:TmeetT2}, there exist two distinct $t_1, t_2\in T$ such that $(t^{-1}, t')=(t_1,t_2)$ or $(t_2,t_1)$. Hence, $t=t_1^{-1}$ or $t_2^{-1}$. Consequently, $|a_iT\cap a_jT|=2$.  On the other hand, if $i=0$, $a_ia_j^{-1}=e a_j^{-1}=s^2\in T^{(2)}$ for some $s\in T$. By Lemma \ref{lm:TmeetT2}, $t^{-1}=s=t'$. Hence, $|a_iT\cap a_jT|=1$. 
	
	By the inclusion--exclusion principle and  \eqref{eq:T2T_partition}, we count the distinct elements in $T^{(2)}T$,
	\begin{equation}\label{eq:expand_T2T}
		|T^{(2)}T| = \sum_{i=0}^{2n} |a_iT| - \sum_{i<j} |a_iT \cap a_jT| + \sum_{r\geq 3} (-1)^{r-1} |a_{i_1}T\cap a_{i_2}T \cap \cdots \cap a_{i_r}T|,
	\end{equation}
	where $i_1<i_2<\cdots <i_r$ cover all the possible values. By definition of $X_i$'s, its left-hand side equals $\sum_{i=1}^N|X_i|$.
	
	It is clear that
	\begin{equation}\label{eq:sum_Li}
		\sum_{i=0}^{2n} |a_iT|=(2n+1)^2.
	\end{equation}
	By Claim 1, 
	\begin{equation}\label{eq:sum_LiLj}
		\sum_{i<j} |a_iT \cap a_jT| = 2n + 2\binom{2n}{2}=4n^2.
	\end{equation}
	Suppose that $g\in a_{i_1}T\cap a_{i_2}T \cap \cdots \cap a_{i_r}T$ with $r\geq 3$. It means that $g\in X_s$ for some $s\geq 3$. Then the contribution for $g$ in the sum $\sum_{r\geq 3} (-1)^{r-1} |a_{i_1}T\cap a_{i_2}T \cap \cdots \cap a_{i_r}T|$ is
	\[ (-1)^{3-1} \binom{s}{3}+(-1)^{4-1} \binom{s}{4} +\cdots =\binom{s}{2} -\binom{s}{1} + \binom{s}{0} = \frac{(s-1)(s-2)}{2}.\]
	Therefore,
	\[\sum_{r\geq 3} (-1)^{r-1} |a_{i_1}T\cap a_{i_2}T \cap \cdots \cap a_{i_r}T| = \sum_{s\geq 3} |X_s|\frac{(s-1)(s-2)}{2}.\]
	Plugging the above equation, \eqref{eq:sum_Li} and \eqref{eq:sum_LiLj} into \eqref{eq:expand_T2T}, we obtain \eqref{eq:main_T2T}.
\end{proof}

\medskip

Our strategy is to derive a contradiction using Equation (2), (3) and (4). We need to further exploit \eqref{eq:T^2T}. It is natural to consider \eqref{eq:T^2T} modulus $3$ as $T^3\equiv T^{(3)} \pmod{3}$. We then have 
\begin{equation}\label{eq:mod3}
	T^{(2)}T = 2(2n+1)G-T^{(3)}+2nT \pmod{3}. 
\end{equation}
Note that $|G|=2n^2+2n+1$ is not divisible by $3$. Therefore, $|T^{(3)}|=2n+1$. We first investigate the case when $n\equiv 0 \pmod{3}$.

\begin{proposition}\label{prop:n=0mod3}
	Theorem \ref{th:main} is true for $n\equiv 0\pmod 3$. 
\end{proposition}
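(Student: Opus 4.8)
\textbf{Proof proposal for Proposition~\ref{prop:n=0mod3}.}

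The plan is to push Equation~\eqref{eq:mod3} further. When $n\equiv 0\pmod 3$ we have $2n\equiv 0$ and $2(2n+1)\equiv 2\pmod 3$, so \eqref{eq:mod3} reads $T^{(2)}T\equiv 2G-T^{(3)}\pmod 3$. As noted after \eqref{eq:mod3}, $|T^{(3)}|=2n+1$, so the coefficient of each $g\in G$ in $T^{(2)}T$ is $\equiv 2\pmod 3$ if $g\notin T^{(3)}$ and $\equiv 1\pmod 3$ if $g\in T^{(3)}$. In the notation $T^{(2)}T=\sum_{i\ge 0}iX_i$ this says: $X_i=\emptyset$ whenever $3\mid i$ (in particular $X_0=X_3=\emptyset$, so $T^{(2)}T$ has full support and no coefficient equal to $3$), and
\[\sum_{\substack{i\ge 1\\ i\equiv 1\!\!\!\pmod 3}}|X_i|=|T^{(3)}|=2n+1.\]

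Next I would feed this into \eqref{eq:main_2n^2+2n+1}, \eqref{eq:main_(2n+1)^2} and \eqref{eq:main_T2T}. Since $X_0=\emptyset$, \eqref{eq:main_2n^2+2n+1} gives $\sum_{i\ge 1}|X_i|=2n^2+2n+1$; subtracting twice this identity from \eqref{eq:main_(2n+1)^2} and using $X_3=\emptyset$ yields $\sum_{i\ge 4}(i-2)|X_i|=|X_1|-1$, while \eqref{eq:main_T2T} (again using $X_3=\emptyset$) becomes $\sum_{i\ge 4}\binom{i-1}{2}|X_i|=2n^2-2n$. I also need two elementary bounds: the coefficient of any $g$ in $T^{(2)}T=\sum_{a\in T^{(2)},\,t\in T}at$ is at most $|T|=2n+1$, so $X_i=\emptyset$ for $i>2n+1$; and $|X_1|\le\sum_{i\equiv 1(3)}|X_i|=2n+1$. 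For $4\le i\le 2n+1$ one has $\binom{i-1}{2}=\tfrac{i-1}{2}(i-2)\le n(i-2)$, hence $2n^2-2n=\sum_{i\ge 4}\binom{i-1}{2}|X_i|\le n\sum_{i\ge 4}(i-2)|X_i|=n(|X_1|-1)$, so $|X_1|\ge 2n-1$. Therefore $|X_1|\in\{2n-1,\,2n,\,2n+1\}$.

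The heart of the argument is then a short case analysis on $|X_1|$, in each case also using the residue count $\sum_{i\ge 4,\ i\equiv 1(3)}|X_i|=2n+1-|X_1|\in\{0,1,2\}$. The recurring mechanism: since $n\equiv 0\pmod 3$, the unique $i\le 2n+1$ with $\tfrac{i-1}{2}=n$ is $i=2n+1$, and $2n+1\equiv 1\pmod 3$, whereas every $i\equiv 2\pmod 3$ with $i\le 2n+1$ has $\tfrac{i-1}{2}\le n-1$. Thus the equality (or near-equality) case of $2n^2-2n\le n(|X_1|-1)$ forces essentially all the weight $\{(i-2)|X_i|:i\ge 4\}$ to concentrate at $i=2n+1$, while $i=2n+1$ can carry only the tiny $\equiv 1\pmod 3$ surplus $2n+1-|X_1|$. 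When $|X_1|=2n-1$ this forces the $i\ge 4$ support to be $\{2n+1\}$, which is impossible since $(2n-1)\mid(|X_1|-1)=2n-2$ fails. When $|X_1|=2n$ the analogous estimate forces the single $\equiv 1(3)$ element to lie in $X_{2n+1}$ and kills the $\equiv 2(3)$ part, so the $i\ge 4$ support is $\{2n+1\}$ with $|X_{2n+1}|=1$ and $|X_2|=2n^2$; substituting back into \eqref{eq:main_T2T} gives $2n^2+2n+1=4n+1+\binom{2n}{2}=2n^2+3n+1$, i.e.\ $n=0$, a contradiction. When $|X_1|=2n+1$ there is no surplus, so writing $n=3l$ and $x_k=|X_{3k+2}|$ the identities become $\sum_k k\,x_k=2l$ and $\sum_k k^2x_k=4l^2-2l$ with $k\le 2l-1$; the elementary bound $\sum_k k^2x_k\le(2l-1)\sum_k k\,x_k$ is then forced into equality, so all the weight sits at $k=2l-1$ and $(2l-1)\mid 2l$, i.e.\ $l=1$ and $n=3$.

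The single remaining value $n=3$ is covered by Lemma~\ref{lm:list<=100}. The main obstacle is the case analysis above: the modulo-$3$ residue data ($X_i=\emptyset$ for $3\mid i$, and the exact count of $i\equiv 1\pmod 3$) must be tracked alongside the three moment identities, and for each value of $|X_1|$ one must pin down the forced extremal configuration and verify that an integrality or divisibility constraint excludes it. It is worth stressing that $n=3$ is a genuine exception to this modulo-$3$ argument: all the conditions above are consistent on paper for $n=3$ (e.g.\ $|X_1|=7$, $|X_2|=16$, $|X_5|=2$ and all other $X_i$ empty satisfy \eqref{eq:main_2n^2+2n+1}--\eqref{eq:main_T2T} and every residue constraint), so an external input such as Lemma~\ref{lm:list<=100} is indispensable at that value.
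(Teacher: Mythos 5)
Your proposal is correct: all the claims check out (I verified the residue classification, the identities $\sum_{i\ge 4}(i-2)|X_i|=|X_1|-1$ and $\sum_{i\ge 4}\binom{i-1}{2}|X_i|=2n^2-2n$, the bound $|X_1|\ge 2n-1$, and each of the three cases, including the sharper estimate with coefficient $\tfrac{j-1}{2}$ rather than $n$ that is needed to force $j=2n+1$ in the case $|X_1|=2n$). The opening is identical to the paper's: reduce \eqref{eq:T^2T} modulo $3$, conclude $X_i=\emptyset$ for $3\mid i$ together with $\sum_{i\equiv 1\,(3)}|X_i|=2n+1$ and $\sum_{i\equiv 2\,(3)}|X_i|=2n^2$, and combine with \eqref{eq:main_2n^2+2n+1}--\eqref{eq:main_T2T}. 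The endgame, however, is genuinely different. The paper exploits the inversion symmetry $X_N=X_N^{(-1)}$ (with $e\in X_1$ and $|G|$ odd) to get $|X_N|\ge 2$, which via the weighted identity $2n=\sum 3i(|X_{3i+1}|+|X_{3i+2}|)$ bounds the top index by $N\le n+2$; a single inequality $2n^2-2n\le \frac{N-1}{2}\cdot 2n\le n(n+1)$ then gives $n\le 3$ in one stroke. You instead use only the trivial bound $N\le 2n+1$ on the support and compensate with a three-case analysis on $|X_1|\in\{2n-1,2n,2n+1\}$, closing each case by an integrality or divisibility obstruction. The paper's route is shorter; yours extracts more structural information about the near-extremal configurations (and, incidentally, the middle case $|X_1|=2n$ could be killed instantly by the same inversion-symmetry observation the paper uses, since a singleton $X_j$ with $j\ge 4$ would have to be $\{e\}$). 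Your closing remark that $n=3$ cannot be excluded by the modulo-$3$ data alone is accurate and consistent with the paper, whose argument likewise only reaches $n\le 3$ before invoking Lemma \ref{lm:list<=100}.
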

\begin{proof} 
	Now \eqref{eq:mod3} becomes
	\begin{equation*}
	T^{(2)}T \equiv 2G-T^{(3)}\pmod{3}.
	\end{equation*}
	Since all coefficients are non-negative, $|T^{(3)}|=2n+1$ and all coefficients of $T^{(3)}$ is $1$, 
	we conclude that 
	\begin{equation}\label{eq:sum_x3i+j_nmod3=0}
	\sum_{i=0}|X_{3i+1}| = 2n+1, \quad	\sum_{i=0}|X_{3i+2}| = 2n^2 \quad \text{and}\quad \sum_{i=0}|X_{3i}| = 0.
	\end{equation}
	By \eqref{eq:main_(2n+1)^2} and \eqref{eq:sum_x3i+j_nmod3=0},
	\begin{equation}\label{eq:balance_1}
		2n= \sum_{i=1} 3i (|X_{3i+1}|+|X_{3i+2}|).
	\end{equation}
	We recall that $N$ is the largest integer with $|X_N|\neq 0$. By \eqref{eq:main_2n^2+2n+1} and \eqref{eq:main_T2T}, we have
	\begin{equation}\label{eq:balance_2}
		2n^2-2n=\sum_{s=4}^N\frac{(s-1)(s-2)}{2}|X_s| \leq 	
		\frac{N-1}{2} \sum_{s=4}^N (s-2) |X_s|. 
	\end{equation}
As $T^{(-1)}=T$ and $T^{(-2)}=T^{(2)}$, it is clear that $X_N=X_N^{(-1)}$. Recall that $e\in X_1$. It then follows that $|X_N|\geq 2$. From \eqref{eq:balance_1}, we derive that $2(N-2)\leq 2n$. 

By \eqref{eq:balance_1}, $N\equiv 1, 2\pmod{3}$ and that $\sum_{i=0}|X_{3i}|=0$, we conclude $\sum_{s=4}^N (s-2) |X_s|\leq 2n$. 
Therefore, we obtain from \eqref{eq:balance_2} that $ 2n^2-2n\leq  (n+1)n$. This is possible only if $n\leq 3$. As for $n=3$, it is already known in Lemma \ref{lm:list<=100} that Theorem \ref{th:main} is true. 
\end{proof}

\medskip

Unfortunately, using the above argument in case $n\equiv \pm 1 \pmod{3}$ does not rule the existence out.  But we are still able to obtain some essential informations in those cases.  

\begin{lemma}\label{lm:n=1mod3}
	Suppose $n\equiv 1\pmod 3$. 	Then, $|X_i|=0$ for all $i\geq 4$. Moreover, we have 
	\begin{equation}\label{eq:value_Xi_nmod3=1}
		|X_3| = \frac{4n(n-1)}{3}, \quad |X_0|=\frac{2n(n-1)}{3}, \quad |X_2|=4n \text{ and } |X_1|=1.
	\end{equation}
\end{lemma}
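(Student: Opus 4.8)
The plan is to mimic the structure of the proof of Proposition~\ref{prop:n=0mod3} but extract sharper numerical information from the mod~$3$ picture when $n\equiv 1\pmod 3$. First I would reduce Equation~\eqref{eq:mod3}: since $2(2n+1)\equiv 2$ and $2n\equiv 2\pmod 3$ when $n\equiv 1\pmod 3$, it becomes $T^{(2)}T\equiv 2G-T^{(3)}+2T\pmod 3$. Because all coefficients on the left are nonnegative, $|T^{(3)}|=|T|=2n+1$, and the supports of $T^{(3)}$ and $T$ are tightly controlled (by Lemma~\ref{lm:TmeetT2}, $T\cap T^{(2)}=\{e\}$, and one should locate how $T$ and $T^{(3)}$ sit relative to each other and relative to the $X_i$'s), I would translate the congruence into congruence conditions on $i$ for each $g\in X_i$ depending on membership in $T$, in $T^{(3)}$, or in neither. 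This should yield relations like $\sum_{i\equiv 1(3)}|X_i|$, $\sum_{i\equiv 2(3)}|X_i|$, $\sum_{i\equiv 0(3)}|X_i|$ expressed in terms of $n$ together with small correction terms coming from the overlap of $T$ and $T^{(3)}$.

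Next I would combine these mod-$3$ counting identities with the three global equations already available: \eqref{eq:main_2n^2+2n+1}, \eqref{eq:main_(2n+1)^2}, and \eqref{eq:main_T2T}. As in the proof of Proposition~\ref{prop:n=0mod3}, \eqref{eq:main_(2n+1)^2} minus a multiple of \eqref{eq:main_2n^2+2n+1} gives a ``balance'' identity bounding $\sum i'|X_{i}|$ for the relevant residue classes by something linear in $n$, while \eqref{eq:main_2n^2+2n+1} together with \eqref{eq:main_T2T} gives the quadratic lower bound $2n^2-2n=\sum_{s\ge 4}\frac{(s-1)(s-2)}{2}|X_s|$. The key new input is that, since $e\in X_1$ and $X_N=X_N^{(-1)}$, we have $|X_N|\ge 2$, and the mod-$3$ restrictions force $N$ to lie in specified residue classes; pushing the inequalities I expect to be squeezed into $N\le 3$, i.e.\ $|X_i|=0$ for all $i\ge 4$. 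Once only $X_0,X_1,X_2,X_3$ survive, the system \eqref{eq:main_2n^2+2n+1}, \eqref{eq:main_(2n+1)^2}, \eqref{eq:main_T2T} becomes three linear equations in $|X_0|,|X_1|,|X_2|,|X_3|$, and the mod-$3$ data (which should pin down $|X_1|$ exactly, presumably $|X_1|=1$ via $e$ and a careful count) supplies the fourth relation; solving gives precisely \eqref{eq:value_Xi_nmod3=1}.

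The main obstacle I anticipate is the bookkeeping around the supports of $T$ and $T^{(3)}$ inside the $X_i$'s: the congruence $T^{(2)}T\equiv 2G-T^{(3)}+2T\pmod 3$ only determines each coefficient modulo $3$, so to get clean equations one must know, for each $g$, whether $g\in T$, $g\in T^{(3)}$, both, or neither, and this requires understanding $T\cap T^{(3)}$ and how these sets meet $T^{(2)}T$ (e.g.\ is $T\subseteq$ some union of small-index $X_i$'s?). Controlling $|T\cap T^{(3)}|$ — or showing the overlap contributes negligibly to the counts — is where I would expect to spend the most effort; the cube-map $g\mapsto g^3$ is a bijection on $G$ here since $3\nmid|G|$, which helps, but relating its image to $T$ is not automatic. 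After that, deriving the contradiction-free but rigid conclusion \eqref{eq:value_Xi_nmod3=1} is a matter of assembling the linear system and checking the arithmetic is consistent (indeed it must be, since this lemma is a structural stepping stone rather than a nonexistence statement, the actual contradiction for $n\equiv\pm1\pmod 3$ coming later from the mod-$5$ analysis of $T^{(4)}T$).
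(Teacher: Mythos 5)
Your reduction of \eqref{eq:mod3} is wrong, and the error is fatal to the rest of the computation. When $n\equiv 1\pmod 3$ we have $2n+1\equiv 0\pmod 3$, so $2(2n+1)\equiv 0$, not $2$; the correct congruence is
\[ T^{(2)}T\equiv 2T-T^{(3)}\pmod 3, \]
with no $2G$ term. Your version $T^{(2)}T\equiv 2G-T^{(3)}+2T\pmod 3$ would give every $g\notin T\cup T^{(3)}$ a coefficient $\equiv 2\pmod 3$, hence $X_0=\emptyset$, contradicting the claimed $|X_0|=\tfrac{2n(n-1)}{3}>0$; it also assigns $e$ the coefficient $2+2-1\equiv 0\pmod 3$, contradicting $e\in X_1$. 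With the correct congruence the classification is clean: the coefficient is $\equiv 0$ off $T\cup T^{(3)}$, $\equiv 2$ on $(T\cup T^{(3)})\setminus\{e\}$, and $\equiv 1$ at $e$, which yields $|X_1|=1$, $|X_{3i+1}|=0$ for $i\ge1$ and $\sum_{i}|X_{3i+2}|=4n$ --- the data that actually lead to \eqref{eq:value_Xi_nmod3=1}.

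Two further points. First, the step you flag as the main obstacle, controlling $T\cap T^{(3)}$, is indeed needed but is settled in two lines by Lemma \ref{lm:TmeetT2}: if $t=t_0^3\in T$ then $t\,t_0^{-1}=t_0^2\in T^{(2)}$, and the last assertion of that lemma forces $t=t_0^{-1}=t_0$, whence $t_0^2=e$ and $t_0=e$ since $|G|$ is odd; so $T\cap T^{(3)}=\{e\}$ and the count $|(T\cup T^{(3)})\setminus\{e\}|=4n$ is exact. Second, your proposed mechanism for killing the $X_i$ with $i\ge4$ --- the quadratic ``balance'' inequality $2n^2-2n\le\frac{N-1}{2}\sum_{s}(s-2)|X_s|$ in the style of Proposition \ref{prop:n=0mod3} --- cannot work here: in this residue class there is no contradiction to be reached, and the mod-$3$ data give no $O(n)$ bound on $\sum_s(s-2)|X_s|$. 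What works is substituting the support information above directly into the exact identity \eqref{eq:main_T2T}; after cancellation one is left with $\sum_{s\ge4}c_s|X_s|\le 0$ with all $c_s>0$, forcing $|X_s|=0$ for $s\ge4$ and $|X_2|=4n$, and then \eqref{eq:main_2n^2+2n+1} and \eqref{eq:main_(2n+1)^2} determine $|X_0|$ and $|X_3|$.
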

\begin{proof} 
	Now \eqref{eq:mod3} becomes
	\begin{equation}\label{eq:T2T_nmod3=1}
		 T^{(2)}T \equiv 2T-T^{(3)} \pmod{3}.
	\end{equation}
	We first show that $T\cap T^{(3)}=\{e\}$. Suppose that $t$ and $t_0\in T$ satisfy that $t=t_0^3\in T$. Then $t t_0^{-1} = t_0^2\in T^{(2)}$ which means $t=t_0^{-1}=t_0$ by \ref{cond:3}. Hence $t=t_0=e$ because $|G|$ is odd.
	
	By comparing the coefficients in \eqref{eq:T2T_nmod3=1}, we see that except for those elements in $T\cup T^{(3)}$, all are congruent to $0 \bmod 3$. Since $T\cap T^{(3)}=\{e\}$ and $e\in X_1$, the coefficients of all the elements in $T\cup T^{(3)}\backslash \{e\}$ are congruent to $2 \bmod 3$. Therefore, 
	we get 
	\[ |X_1|=1, \ |X_{3i+1}|=0  \mbox{ for } i\geq 1 \mbox{ and } \sum_{i=0}|X_{3i+2}|=4n.\]
	Plugging them into \eqref{eq:main_T2T}, we get
	\[ 1+|X_2| +\sum_{i=3}^N|X_i| = \sum_{i=0}|X_{3i+2}|+1 + \sum_{s=3}^{N}\frac{(s-1)(s-2)}{2}|X_s|,\]
    from which we conclude that 
	\[ 2|X_4| + 4|X_5| +9|X_6| +\cdots \leq 0. \]
	This implies that $|X_i|=0$ for $i\geq 4$ and $|X_2|=4n$. Hence, by \eqref{eq:main_2n^2+2n+1} and \eqref{eq:main_(2n+1)^2},
	\[\left\{
	\begin{array}{l}
	2n^2+2n+1 = |X_0|+1+4n+|X_3|,\\
	4n^2+4n+1 = 1+2\cdot 4n +3|X_3|.
	\end{array}
	\right.\]
	Solving the above equations, we get the desired result. 
	\end{proof}	
	
	\medskip
	
	Next, we consider the case when $n \equiv 2 \pmod{3}$. 
	
\begin{lemma}\label{lm:n=2mod3}
	Suppose $n\equiv 2\pmod 3$. Then, $|X_i|=0$ for all $i\geq 5$. Moreover, we have 
	\begin{equation}\label{eq:value_Xi_nmod3=2}
		|X_1| = \frac{4n^2-2n+3}{3},\quad |X_2|= |X_3|=2n,\quad |X_4|=\frac{2n^2-4n}{3}\text{ and }|X_0|=0.
	\end{equation}
\end{lemma}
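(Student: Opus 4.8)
The plan is to follow the template of Lemma \ref{lm:n=1mod3} almost verbatim, changing only the arithmetic of the coefficients of \eqref{eq:mod3} to reflect the residue $n\equiv 2\pmod 3$. When $n\equiv 2\pmod 3$ we have $2(2n+1)\equiv 1$ and $2n\equiv 1\pmod 3$, so \eqref{eq:mod3} reads
\[ T^{(2)}T \equiv G - T^{(3)} + T \pmod 3. \]
As in the proof of Lemma \ref{lm:n=1mod3}, if $t=t_0^3\in T$ then $t t_0^{-1}=t_0^2\in T^{(2)}$ forces $t=t_0^{-1}=t_0$ by \ref{cond:3} and Lemma \ref{lm:TmeetT2}, hence $t=t_0=e$ since $|G|$ is odd; thus $T\cap T^{(3)}=\{e\}$. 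Reading off the coefficient of each $g$ on the right-hand side modulo $3$ then shows that the elements of $T^{(3)}\setminus\{e\}$ are exactly those with coefficient $\equiv 0\pmod 3$ in $T^{(2)}T$, the elements of $T\setminus\{e\}$ are exactly those with coefficient $\equiv 2\pmod 3$, and every remaining element (including $e$, which lies in $X_1$ by the discussion preceding Lemma \ref{lm:T^2T}) has coefficient $\equiv 1\pmod 3$. Hence
\[ \sum_{i\ge 0}|X_{3i}| = 2n,\qquad \sum_{i\ge 0}|X_{3i+2}| = 2n,\qquad \sum_{i\ge 0}|X_{3i+1}| = 2n^2-2n+1. \]

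The heart of the argument is to force $|X_0|=0$ and $|X_i|=0$ for all $i\ge 5$. I would subtract \eqref{eq:main_2n^2+2n+1} from \eqref{eq:main_(2n+1)^2} to get $\sum_{i\ge 2}(i-1)|X_i| = 2n^2+2n+|X_0|$, and rewrite \eqref{eq:main_T2T} (using $\sum_{i\ge 1}|X_i| = 2n^2+2n+1-|X_0|$) as $\sum_{s\ge 3}\binom{s-1}{2}|X_s| = 2n^2-2n-|X_0|$. Taking the difference of these two identities, the coefficient of $|X_i|$ is $1$ for $i\in\{2,3\}$, is $0$ for $i=4$, and is $(i-1)(4-i)/2<0$ for $i\ge 5$, which yields
\[ |X_2|+|X_3| = 4n + 2|X_0| + 2|X_5| + 5|X_6| + 9|X_7| + \cdots. \]
On the other hand, the mod-$3$ relations give $|X_2|\le 2n$ and $|X_3|\le 2n-|X_0|$, so $|X_2|+|X_3|\le 4n-|X_0|$; comparing, $3|X_0| + 2|X_5| + 5|X_6| + \cdots \le 0$, forcing $|X_0|=0$ and $|X_i|=0$ for all $i\ge 5$.

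With this in hand, $|X_2|=|X_3|=2n$ is immediate from the mod-$3$ relations, and \eqref{eq:main_2n^2+2n+1} and \eqref{eq:main_(2n+1)^2} become $|X_1|+|X_4|=2n^2-2n+1$ and $|X_1|+4|X_4|=4n^2-6n+1$; solving gives $|X_4|=(2n^2-4n)/3$ and $|X_1|=(4n^2-2n+3)/3$, which is exactly \eqref{eq:value_Xi_nmod3=2}. I expect the only delicate point to be the bookkeeping in the difference of the two moment identities — verifying that the $|X_4|$-coefficient vanishes while the coefficients for $i\ge 5$ are strictly negative, so that the sandwich $4n+2|X_0|\le |X_2|+|X_3|\le 4n-|X_0|$ is tight. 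No tiling-theoretic input beyond Lemma \ref{lm:TmeetT2} and \eqref{eq:T^2T} is required; the rest is a routine transcription of the $n\equiv 1\pmod 3$ case.
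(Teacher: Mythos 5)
Your proof is correct and follows essentially the same strategy as the paper: reduce \eqref{eq:mod3} for $n\equiv 2\pmod 3$, use $T\cap T^{(3)}=\{e\}$ to identify the residue classes of the coefficients, and then combine the three counting identities \eqref{eq:main_2n^2+2n+1}, \eqref{eq:main_(2n+1)^2} and \eqref{eq:main_T2T} linearly to kill $|X_s|$ for $s\geq 5$ before solving for the rest. The only (harmless) deviation is that the paper asserts $X_0=\emptyset$ directly (since every $t^3=t^2\cdot t$ lies in the support of $T^{(2)}T$, and the mod-$3$ relation puts $X_0$ inside $T^{(3)}\setminus\{e\}$), whereas you carry $|X_0|$ as an unknown and force it to vanish from the sandwich inequality, which is a perfectly valid and self-contained alternative.
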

\begin{proof}
	 Now \eqref{eq:mod3} becomes
	\begin{equation*}
	T^{(2)}T \equiv G-T^{(3)}+T \pmod{3}.
	\end{equation*}
	As shown before, we have $T\cap T^{(3)} =\{e\}$. Thus, $X_0=\emptyset$ and
	\[ T^{(3)}\backslash \{e\} =\bigcup_{i=0} X_{3i},\  T\backslash \{e\} =\bigcup_{i=0} X_{3i+2} \mbox{ and } G\backslash (T\cup T^{(3)}) \cup \{e\}=\bigcup_{i=0} X_{3i+1}.\]
	
	Hence
	\begin{equation}\label{eq:sum_x3i+1_nmod3=2}
	\sum_{i=0}|X_{3i+1}| = 2n^2-2n+1,
	\end{equation}
	\begin{equation}\label{eq:sum_x3i+2and0_nmod3=2}
	\sum_{i=0}|X_{3i+2}| = 2n \quad \text{and}\quad \sum_{i=0}|X_{3i}| = 2n.
	\end{equation}
	By \eqref{eq:main_T2T}, we have
	\[2n^2+2n+1 \geq 4n+1 + \sum_{s=3}^{N}\frac{(s-1)(s-2)}{2}|X_s|.\]
	Summing up the above equation and \eqref{eq:sum_x3i+1_nmod3=2}, we have 
	\[4n^2-4n+1 \geq |X_1| + |X_3| + 4|X_4| +6|X_5| + 10|X_6|+16|X_7|+\cdots.\]
	
	On the other hand, plugging \eqref{eq:main_(2n+1)^2}, \eqref{eq:sum_x3i+2and0_nmod3=2} and $X_0=\emptyset$ into $\sum_{i=1}i|X_i|-2\sum_{i=0} |X_{3i}|-2\sum_{i=0} |X_{3i+2}|$, we have
	\[ 4n^2-4n+1=|X_1|+|X_3| + 4|X_4| + 3|X_5| + 4|X_6| + 7|X_7| +\cdots.  \]

	Thus $|X_5|=|X_6|=\cdots =0$. It follows from \eqref{eq:sum_x3i+2and0_nmod3=2} that $|X_2|=2n$. By solving
	\[
	\left\{
	\begin{array}{ll}
	4n^2-4n+1&=|X_1|+|X_3| + 4|X_4|,\\
	4n^2+4n+1&=|X_1|+2|X_2| + 3|X_3| + 4|X_4|,\\
	2n^2-2n+1 &= |X_1|+|X_4|.
	\end{array}
	\right.
	\]
	we get our desired result.  
\end{proof}
	
	\medskip
	
In view of the above results, we see that by just considering modulus $3$, it doesn't rule out the case for $n\equiv 1,2 \pmod{3}$.  It is natural to consider a similar equation modulus $5$. By \ref{cond:3}, 
\[ T^5 = (8n^2+8n+2)(2n+1)G - T^{(4)}T -4n T^{(2)}T + (4n^2+2n)T\]
which implies
\begin{equation}\label{eq:T^4T}
	T^{(4)}T = (8n^2+8n+2)(2n+1)G - T^5 -4n T^{(2)}T + (4n^2+2n)T.
\end{equation}
As before, we write 
\[ T^{(4)}T = \sum_{i=0}^{M} iY_i \]
where $\{Y_i: i=0,1,\dots, M\}$ forms a partition of $G$.
Since $|T^{(4)}|=2n+1$, we have :
\begin{equation*}
	2n^2+2n+1 = \sum_{i=0}^{M} |Y_i|,
\end{equation*}
and 
\begin{equation}\label{eq:Y_is_condition2}
	(2n+1)^2 = \sum_{i=1}^M i|Y_i|. 
\end{equation}
However, the situation is slightly different now. 
\begin{lemma}
	There exists an integer $\Delta\in [-2n,0]$ such that
	\begin{equation}\label{eq:main_T4T}
	\sum_{i=1}^M|Y_i| = 4n+1 +\Delta + \sum_{s=3}^{M}\frac{(s-1)(s-2)}{2}|Y_s|.
	\end{equation}
	Moreover, we have
	\begin{equation}\label{ok1}
		2|Y_1| + 3|Y_2| +3|Y_3| +2 |Y_4| \geq 4n^2+6n+2.
	\end{equation}
\end{lemma}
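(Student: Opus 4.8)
The plan is to imitate the inclusion–exclusion argument of Lemma~\ref{lm:T^2T}, now applied to the decomposition $T^{(4)}T=\sum_{i=0}^{2n}b_iT$, where $T^{(4)}=\sum_{i=0}^{2n}b_i$ with $b_0=e$ and $b_i^{-1}=b_{2n+1-i}$ (this pairing uses \ref{cond:1} and \ref{cond:2} together with the fact that $\gcd(4,|G|)=1$, so $|T^{(4)}|=2n+1$). The key difference from the $T^{(2)}$ case is that $T^{(4)}$ need not itself satisfy Condition~\ref{cond:3}, so the clean count $|b_iT\cap b_jT|\in\{1,2\}$ of Claim~1 is no longer exact. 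Instead I expect the following: for $i=0<j$ one still has $b_ib_j^{-1}=b_j^{-1}=s^4$ for some $s\in T$; by Lemma~\ref{lm:TmeetT2} applied to whether $s^4\in T^{(2)}$ — equivalently whether $s^2\in T^{(2)}$, i.e.\ $s\in T^{(2)}$, i.e.\ $s=e$ — one gets $|b_0T\cap b_jT|=2$ unless $b_j^{-1}$ is a fourth power of an element whose square lies in $T^{(2)}$. More robustly, I would just write $|b_iT\cap b_jT|=1+\varepsilon_{ij}$ with $\varepsilon_{ij}\in\{0,1\}$ for the pairs with $b_ib_j^{-1}\notin T^{(2)}$ and $|b_iT\cap b_jT|\le 2$ always (since $t^{-1}t'$ determines at most two preimages by Lemma~\ref{lm:TmeetT2}), and collect the deficiency into a single correction term. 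Defining $\Delta$ to be exactly that correction term coming from the over/under-count of the pairwise intersections relative to the idealized values $\{1,2\}$ used in \eqref{eq:sum_LiLj}, one obtains \eqref{eq:main_T4T}. The bound $\Delta\in[-2n,0]$ should follow because each of the $2n$ "diagonal-type" intersections $|b_0T\cap b_jT|$ can drop from the value $2$ (used implicitly) down to $1$, contributing at most $-1$ each, while the generic pairwise intersections $|b_iT\cap b_jT|$ for $0<i<j$ are still bounded above by $2$ (hence contribute $\le 0$ to $\Delta$) and below in a way that is absorbed; pinning down that the total correction never exceeds $0$ and never drops below $-2n$ is the delicate bookkeeping step.

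For the second inequality \eqref{ok1}, I would combine the three linear relations now available: the cardinality relation $\sum_i|Y_i|=2n^2+2n+1$, the weighted relation \eqref{eq:Y_is_condition2}, and the inclusion–exclusion relation \eqref{eq:main_T4T} with $\Delta\ge -2n$. Concretely, $\sum_{i\ge1}|Y_i| = (2n^2+2n+1)-|Y_0|\le 2n^2+2n+1$; substituting into \eqref{eq:main_T4T} and using $\Delta\ge -2n$ gives
\[
\sum_{s\ge3}\frac{(s-1)(s-2)}{2}|Y_s| \le 2n^2+2n+1 - (4n+1) + 2n = 2n^2 .
\]
Meanwhile from \eqref{eq:Y_is_condition2} one has $|Y_1|+2|Y_2|+3|Y_3|+4|Y_4|+\sum_{s\ge5}s|Y_s| = 4n^2+4n+1$. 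The idea is to eliminate the tail $\sum_{s\ge5}$ between these two: since for $s\ge5$ the coefficient $\tfrac{(s-1)(s-2)}{2}$ dominates $s$ (indeed $\tfrac{(s-1)(s-2)}{2}\ge s$ iff $s\ge 5$, with the ratio growing), one can bound $\sum_{s\ge5}s|Y_s|\le \sum_{s\ge5}\tfrac{(s-1)(s-2)}{2}|Y_s|\le \sum_{s\ge3}\tfrac{(s-1)(s-2)}{2}|Y_s| - |Y_3| \le 2n^2 - |Y_3|$. Plugging this back:
\[
|Y_1|+2|Y_2|+3|Y_3|+4|Y_4| \ge 4n^2+4n+1 - (2n^2 - |Y_3|) = 2n^2+4n+1+|Y_3|,
\]
which is already stronger than needed once combined with the cardinality count; a cleaner route is to add a suitable multiple of $\sum_{i\ge1}|Y_i|\le 2n^2+2n+1$ to \eqref{eq:Y_is_condition2} so that every coefficient for $s\ge5$ becomes $\le 0$ while the coefficients of $|Y_1|,\dots,|Y_4|$ stay at $2,3,3,2$, forcing $2|Y_1|+3|Y_2|+3|Y_3|+2|Y_4|\ge (4n+1)^2 - (\text{correction}) \ge 4n^2+6n+2$. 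I will choose the multiplier so the $|Y_4|$ coefficient is exactly $2$ and the $|Y_5|$ coefficient is $\le 0$, i.e.\ take $5\cdot(\text{weighted})-3\cdot(\text{cardinality-type bound})$ type combination, and check the arithmetic.

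The main obstacle, as in the $\Delta$ definition, is the precise sign and magnitude control of the correction term: I must argue that passing from $T^{(2)}$ to $T^{(4)}$ can only \emph{decrease} certain intersection sizes (never increase them beyond the bound $2$ that Lemma~\ref{lm:TmeetT2} already guarantees), so that $\Delta\le 0$, and simultaneously that the only intersections that can shrink are the $2n$ ones of the form $b_0T\cap b_jT$ — equivalently, that for $0<i<j$ the element $b_ib_j^{-1}=b_ib_i^{(4)}\cdots$ is genuinely never a square in $T^{(2)}$ except in controlled cases, or that even when it is a square the intersection stays at the "expected" value. Getting this dichotomy exactly right — rather than the crude two-sided bound — is what makes $[-2n,0]$ (as opposed to a symmetric interval) the correct range, and it is where I would spend the most care, likely by a direct case analysis of when $b_ib_j^{-1}\in T^{(4)(2)}$ versus $\in T^{(2)}$ and invoking the "$t_1t_2=t^2\iff t_1=t_2=t$" clause of Lemma~\ref{lm:TmeetT2}.
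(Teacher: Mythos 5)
Your overall strategy---rerun the inclusion--exclusion of Lemma \ref{lm:T^2T} on $T^{(4)}T=\sum_{i}b_iT$ and absorb the change in the pairwise-intersection count into a correction term $\Delta$---is exactly the paper's. But the two places you defer as ``delicate bookkeeping'' are precisely where your plan, as written, points the wrong way. First, the $2n$ intersections $|b_0T\cap b_jT|=|T\cap b_jT|$ with $j>0$ do not ``drop from $2$ to $1$'': they are all \emph{exactly} $2$. Indeed $t\in T\cap b_jT$ forces $tt_0^{-1}=b_j$ with $t,t_0^{-1}\in T$, and $b_j\in T^{(4)}\setminus\{e\}$ lies outside $T^{(2)}$ because $T^{(2)}\cap T^{(4)}=\{e\}$ (apply Lemma \ref{lm:TmeetT2} to $T^{(2)}$, which satisfies the same hypotheses as $T$); so Lemma \ref{lm:TmeetT2} gives two solutions. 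It is this uniform jump from the value $1$ of Lemma \ref{lm:T^2T} to the value $2$, over $2n$ pairs, that produces the $-2n$. The intersections that \emph{can} shrink from $2$ to $1$ are the off-diagonal ones $|b_iT\cap b_jT|$ with $0<i<j$, exactly when $b_i^{-1}b_j\in T^{(2)}$; the number $\delta$ of such pairs is at most $2n$ because $T^{(4)}$ also satisfies Condition \ref{cond:3}, so each $s\in T^{(2)}\setminus\{e\}$ occurs as $b_i^{-1}b_j$ for at most one pair. Then $\Delta=\delta-2n\in[-2n,0]$. Your closing paragraph proposes to prove that ``the only intersections that can shrink are the $2n$ ones of the form $b_0T\cap b_jT$''---that is the reverse of the truth, and pursuing it would not yield the asymmetric interval $[-2n,0]$ (nor the upper bound $\Delta\le0$, which is used later in the paper).

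For \eqref{ok1}, your explicit computation ends at $|Y_1|+2|Y_2|+2|Y_3|+4|Y_4|\ge 2n^2+4n+1$, which does not imply \eqref{ok1}: the coefficient of $|Y_4|$ sits on the wrong side of the target pattern $2,3,3,2$ and no rescaling repairs it. Your proposed ``cleaner route'' of adding a multiple of the cardinality bound to \eqref{eq:Y_is_condition2} cannot work either, since that only produces coefficient sequences of the form $i+c$. The combination you are looking for is simply \eqref{eq:main_T4T} $+$ \eqref{eq:Y_is_condition2}: the left-hand side becomes $\sum_i(i+1)|Y_i|$, and moving the terms $\frac{(s-1)(s-2)}{2}|Y_s|$ across leaves the coefficients $i+1-\frac{(i-1)(i-2)}{2}=-\frac{i(i-5)}{2}$, which equal $2,3,3,2$ for $i=1,\dots,4$ and are $\le 0$ for $s\ge 5$; together with $\Delta\ge-2n$ this gives $2|Y_1|+3|Y_2|+3|Y_3|+2|Y_4|\ge 4n^2+8n+2+\Delta\ge 4n^2+6n+2$ in one line.
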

\begin{proof}
	The proof is quite similar to the one for Lemma \ref{lm:T^2T}. The only different part is Claim 1.
	
	By \ref{cond:1}, we may write $T^{(4)}= \sum_{i=0}^{2n}a^2_i$ with $a_0=e$ and $a_{i}^{-1}=a_{2n+1-i}$ for $i=1,2,\dots, 2n$. Hence
	\begin{equation*}
	T^{(4)}T=\sum_{i=0}^{2n}a^2_iT.
	\end{equation*}
	
	\textbf{Claim 1.} $|T\cap a^2_iT|=2$ for $i>0$ and 
	\[ 4n^2- 6n\leq\sum_{0<i<j\leq 2n+1} |a_i^2T\cap a_j^2T|\leq 4n^2-4n.\]
	
	Suppose that $t\in T\cap a^2_iT$. Then there exists $t_0$ such that $tt_0 = a_i^2\in T^{(4)}\setminus\{e\}$. Note that $a_i^2\in G\setminus T^{(2)}$ as $T^{(4)}\cap T^{(2)}=\{e\}$ by \ref{cond:3}. Hence, there are two choices for $t$ and hence $|T\cap a^2_iT|=2$. 
	
	For $j>i>0$, as shown before, 
	\[ |a_i^2T\cap a_j^2T| = \begin{cases}
		1, &a_i^{-2}a_j^2\in T^{(2)},\\
		2, & \text{ otherwise.}
	\end{cases} \]
	To find the number of pairs $(i,j)$ with $0<i<j$ when  $|a_i^2T\cap a_j^2T| =1$, we need to find for each $s\in T^{(2)}$, the number of pairs of $(i,j)$ with $i<j$ and $a_i^{-2}a_j^2=s$. Since $a_i^{-2}, a_j^2\in T^{(4)}$ and $T^{(4)}$ also satisfies Condition \ref{cond:3}, it follows that the number of pairs is at most $1$. Therefore, 
	\[0\leq \delta =|\{(i,j): 0<i<j,~~a_i^{-2}a_j^2\in T^{(2)}\}|\leq 2n. \]
	Thus
	\begin{align*}
	\sum_{i<j} |a^2_iT \cap a^2_jT| = &\sum_{i\neq 0} |T \cap a^2_iT|+\sum_{0<i<j} |a^2_iT \cap a^2_jT|\\
	=&2\cdot 2n+ 2\cdot(\binom{2n}{2}-\delta)+\delta\\
	=& 4n^2+2n-\delta.
	\end{align*}
	By applying a similar argument to $T^{(4)}T$ as in the proof of in Lemma \ref{lm:T^2T}, we obtain 
	\[\sum_{i=1}^M|Y_i| = (2n+1)^2 -(4n^2+2n-\delta) + \sum_{s=3}^{M}\frac{(s-1)(s-2)}{2}|Y_s|.\]
	Setting $\Delta= \delta-2n$, we obtain \eqref{eq:main_T4T}.
	
	By adding up \eqref{eq:main_T4T} and \eqref{eq:Y_is_condition2}, we obtain 
	\begin{equation*}
		 2|Y_1| + 3|Y_2| +3|Y_3| +2 |Y_4| = 4n^2+8n+2+\Delta + \sum_{s=5}(\frac{(s-1)(s-2)}{2}-s-1)|Y_s|.
	\end{equation*}
	 Finally, as the last sum is always non-negative and $\Delta\geq -2n$, we obtain \eqref{ok1}. 
\end{proof}

\medskip 

Now, we are ready to resume the proof of Theorem \ref{th:main}. First we consider a special case.
\medskip

\begin{proposition}\label{prop:nmod5=0}
	Theorem \ref{th:main} is true for $n\equiv 0\pmod{5}$.
\end{proposition}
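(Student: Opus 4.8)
The plan is to run the argument of Proposition~\ref{prop:n=0mod3} essentially verbatim, with the modulus $3$ replaced by $5$, the identity \eqref{eq:T^2T} replaced by \eqref{eq:T^4T}, and the partition $\{X_i\}$ replaced by $\{Y_i\}$; the one new ingredient is a way to dispose of a single degenerate configuration, namely $T=T^{(4)}$, which the $\{X_i\}$‑argument never met.

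First I would reduce \eqref{eq:T^4T} modulo $5$. Since $5\mid n$ we have $4n\equiv 4n^2+2n\equiv 0$ and $(8n^2+8n+2)(2n+1)\equiv 2\pmod 5$, while $T^5\equiv T^{(5)}\pmod 5$ because $5$ is prime; hence $T^{(4)}T\equiv 2G-T^{(5)}\pmod 5$. As $5\nmid|G|$, $T^{(5)}$ is a $0$–$1$ element of size $2n+1$, so comparing coefficients gives $|Y_i|=0$ for $i\not\equiv 1,2\pmod 5$ (in particular $Y_0=\varnothing$, so $\sum_{i\ge1}|Y_i|=2n^2+2n+1$), together with $\sum_{i\equiv 1}|Y_i|=2n+1$ and $\sum_{i\equiv2}|Y_i|=2n^2$. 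Exactly as in Proposition~\ref{prop:n=0mod3}, feeding these into \eqref{eq:Y_is_condition2} gives the balance relation $\sum_{k\ge1}5k\bigl(|Y_{5k+1}|+|Y_{5k+2}|\bigr)=2n$, and feeding them into \eqref{eq:main_T4T} (using $|Y_3|=|Y_4|=0$) gives $\sum_{s\ge6}\binom{s-1}{2}|Y_s|=2n^2-2n-\Delta\ge 2n^2-2n$; here the sign $\Delta\le0$ is precisely what makes the extra error term harmless.

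Next I would bound $M$. The balance relation gives $\sum_{s\ge6}(s-2)|Y_s|\le 2n$, and since $\binom{s-1}{2}\le\frac{M-1}{2}(s-2)$ for $s\le M$, we get $2n^2-2n\le(M-1)n$, i.e.\ $M\ge 2n-1$; on the other hand $Y_M\ne\varnothing$ and the balance relation force $M\le 2n+2$, so the only admissible values (using $5\mid n$) are $M=2n+1$ or $2n+2$. I would then look at the coefficient $c$ of $e$ in $T^{(4)}T$, which equals $|T\cap T^{(4)}|$ and hence satisfies $c\equiv1\pmod5$ and $c\le 2n+1$. If $c<2n+1$ then $c\le 2n-4<M$, so $Y_M$ is a nontrivial symmetric set, whence $|Y_M|\ge2$ and the balance relation improves to $M\le n+2$, contradicting $M\ge 2n-1$. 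Thus the only surviving case is $c=2n+1$, i.e.\ $T=T^{(4)}$.

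The hard part, and the main obstacle, is excluding $T=T^{(4)}$. The plan here is to use $V:=T-T^{(2)}$: one has $\sum_g V_g=0$ and $V^{(-1)}=V$, and since $(T^{(2)})^2=2G-T^{(4)}+2n=2G-T+2n$, subtracting \ref{cond:3} for $T^{(2)}$ from \ref{cond:3} for $T$ gives $V(T+T^{(2)})=V$; one also computes $VT^{(2)}T=-(2n-1)V$, and these combine to the clean identity $V^3=(8n-3)V$. Applying the characters of $G$ (each $\chi(V)$ is real, as $V^{(-1)}=V$), every $\chi(V)^2$ lies in $\{0,8n-3\}$, so by Parseval $4n|G|=|G|\sum_g V_g^2=\sum_{\chi\ne\chi_0}\chi(V)^2$ is a multiple of $8n-3$; as $\gcd(8n-3,4n)=1$ when $3\nmid n$, this forces $(8n-3)\mid|G|=2n^2+2n+1$, and a short Euclidean computation shows $\gcd(8n-3,2n^2+2n+1)\mid 65$, so $8n-3\mid 65$, impossible for $n\ge5$. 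The case $3\mid n$ is already covered by Proposition~\ref{prop:n=0mod3}, finishing the proof. The only genuinely computational points are the two product identities feeding into $V^3=(8n-3)V$; everything else is bookkeeping parallel to the $n\equiv0\pmod3$ case.
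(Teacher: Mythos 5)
Your argument is correct, and up to the point where you isolate the degenerate configuration $T=T^{(4)}$ it is the same proof as the paper's: the congruence $T^{(4)}T\equiv 2G-T^{(5)}\pmod 5$, the two counting identities, the balance relation $\sum 5k(|Y_{5k+1}|+|Y_{5k+2}|)=2n$, and a case analysis that forces the coefficient of $e$ in $T^{(4)}T$ to be $2n+1$ (the paper phrases this as $|Y_M|\geq 2$ versus $Y_M=\{e\}$ with $M=2n+1$, you phrase it via $M\geq 2n-1$ and the residue of $c$ modulo $5$, but these are the same bookkeeping). Where you genuinely diverge is the endgame. The paper stays elementary: from $T=T^{(4)}$ it gets $T^{(4)}T=T^2=2G-T^{(2)}+2n$, compares with the mod-$5$ congruence to conclude $T^{(5)}=T^{(2)}$, and then uses the last assertion of Lemma \ref{lm:TmeetT2} on $t^4\cdot t=s^2$ to force $t^3=e$ for every $t\in T$, which is absurd. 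You instead introduce $V=T-T^{(2)}$, derive $V^3=(8n-3)V$ from \ref{cond:3} and its image under $g\mapsto g^2$, and apply characters plus Parseval to get the divisibility $(8n-3)\mid 4n\,|G|$, hence $(8n-3)\mid 65$ when $3\nmid n$ (the case $3\mid n$ being covered by Proposition \ref{prop:n=0mod3}); I checked the product identities $V(T+T^{(2)})=V$ and $VT^{(2)}T=-(2n-1)V$ and the gcd computation, and they are all correct. Your route is longer and imports character-theoretic machinery in the style of \cite{zhang_nonexistence_2019}, but it does not need the observation $T^{(5)}=T^{(2)}$ and yields the structural identity $V^3=(8n-3)V$, which is of independent interest; the paper's route is shorter and purely combinatorial. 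Either closes the case.
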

\begin{proof}
	By \eqref{eq:T^4T}, we obtain 
	\begin{equation}\label{0mod5}	
		T^{(4)}T \equiv 2G-T^{(5)} \pmod{5}.
	\end{equation}
	In this case, $5$ doesn't divide $|G|$. Therefore, $|T^{(5)}|=2n+1$. Consequently, 
	\[   \bigcup_{i=0} Y_{5i+1}=T^{(5)}, \ \bigcup_{i=1} Y_{5i+2}=G\backslash T^{(5)}.\]
	Therefore,  $|Y_i|=0$ for $i\not\equiv 1,2 \pmod{5}$ and 
	\begin{equation}\label{eq:Y_5i+1_andY_5i+2}
		\sum_{i=0}|Y_{5i+1}| = 2n+1,\quad \sum_{i=0}|Y_{5i+2}| = 2n^2. 
	\end{equation}
		Hence
	\begin{equation}\label{eq:sum_Y_1+5i_+_2Y_2+5i}
		 \sum_{i=0}|Y_{5i+1}|+ \sum_{i=0}2|Y_{5i+2}| = 4n^2 + 2n +1. 
	\end{equation}
	On the other hand,
	\[|Y_1| + 2|Y_2| + 6|Y_6| + 7 |Y_7| + \cdots=\sum_{i=1}^M i|Y_i|=4n^2+4n+1. \]
	Together with \eqref{eq:sum_Y_1+5i_+_2Y_2+5i}, we get
	\begin{equation}\label{eq:5i_Y5i+1_Y5i+2}
		 5\sum_{i=1}i(|Y_{5i+1}|+|Y_{5i+2}|)=2n. 
	\end{equation}
	Recall that $M=\max\{ i: Y_i\neq \emptyset\}$.  By \eqref{eq:main_T4T} and \eqref{eq:Y_5i+1_andY_5i+2}, 
	\begin{equation}\label{eq:nmod5=0_balance_2}
		2n^2-2n-\Delta = \sum_{s=3}^M \frac{(s-1)(s-2)}{2} |Y_s| 	 \leq 	
		\frac{M-1}{2} \sum_{s=3}^M (s-2) |Y_s|. 
	\end{equation}
	As $|Y_i|=0$ for $i\not\equiv 1,2 \pmod{5}$, it follows from \eqref{eq:5i_Y5i+1_Y5i+2}
	\begin{equation}\label{eq:nmod5=0_balance_3}
		 \sum_{s=3}^M  (s-2)|Y_s|\leq 2n.
	\end{equation}

	\textbf{Case (\rn{1})} If $|Y_M|\geq 2$, then as in the proof of Proposition \ref{prop:n=0mod3}, we obtain $2(M-2)\leq 2n$ and 
	$M-1\leq n+1$. Plugging them into \eqref{eq:nmod5=0_balance_2} and \eqref{eq:nmod5=0_balance_3}, we get $2n^2-2n-\Delta  \leq (n+1)n$ and $n\leq 3$. This is impossible. 

	\textbf{Case (\rn{2})} $|Y_M|=1$.  Note that $Y_M=Y_M^{-1}$. Hence, $Y_M=\{e\}$.

	If $M<2n+1$, then $M-1<2n$. Then,  in view of \eqref{eq:5i_Y5i+1_Y5i+2}, there exists $j\neq M$ such that $|Y_j|\geq 1$. Suppose $j=5i+c$ where $i\geq 1$ and $c=1$ or $2$. Again, $Y_j=Y_j^{-1}$ implies, $M-2\leq 2n-10$. Consequently, 
	\[2n^2-2n-\Delta \leq \frac{(M-1)}{2}\sum_{s=3}^M  (s-2)|Y_s|\leq (2n-9)n.\]
	This is impossible as $\Delta\leq 0$. 

	Lastly, we assume $M=2n+1$ and $Y_M=\{e\}$. This is possible only when $T=T^{(4)}$. 
	In that case, $T^{(4)}T=T^2=2G-T^{(2)}+2n$. It follows from \eqref{0mod5} that $ T^{(5)}=T^{(2)}$. 
	For any $t\in T$, there exists $s\in T$ such that $t^5=s^2$. As $T^{(4)}=T$, $t^4\in T$. Hence, $t^4 t=s^2$ and $s^2\in T^{(2)}$. By Lemma \ref{lm:TmeetT2}, this is possible only when $t=t^4=s$. But it then follows that $t^3=e$. Hence $|T|\leq 3$ which is impossible. 
\end{proof}

\medskip

\begin{proposition}\label{prop:n=1mod3}
	Theorem \ref{th:main} is true if $n\equiv 1 \pmod{3}$.
\end{proposition}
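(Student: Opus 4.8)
The plan is to push \eqref{eq:T^4T} into $\mathbb{Z}[G]$ modulo $5$ and feed in the exact description of $T^{(2)}T$ from Lemma \ref{lm:n=1mod3}. Since $n\equiv1\pmod3$, its class modulo $15$ is one of $1,4,7,10,13$; the class $n\equiv10\pmod{15}$ is $n\equiv0\pmod5$ and is covered by Proposition \ref{prop:nmod5=0}, so I would treat $n\equiv1,4,7,13\pmod{15}$. Using $T^5\equiv T^{(5)}\pmod5$ and $T^{(2)}T=X_1+2X_2+3X_3$ with $X_1=\{e\}$, equation \eqref{eq:T^4T} becomes
\[
T^{(4)}T\ \equiv\ \bigl[(8n^{2}+8n+2)(2n+1)\,G-4n\,(X_1+2X_2+3X_3)\bigr]+(4n^{2}+2n)\,T-T^{(5)}\pmod5 .
\]
The bracketed term is a constant $\beta_i\in\{0,\dots,4\}$ on each block $X_i$, the $\beta_i$ depending only on $n\bmod5$, and $X_0,X_1,X_2,X_3$ partition $G$. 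Writing $\mu_g$ for the coefficient of $g$ in $T^{(5)}$, $\varepsilon\equiv4n^{2}+2n\pmod5$, and $i(g)$ for the block of $g$, the coefficient $y_g$ of $g$ in $T^{(4)}T$ then obeys $y_g\equiv\beta_{i(g)}+\varepsilon\,[g\in T]-\mu_g\pmod5$. Two facts I would lean on throughout: $T\cap X_0=\emptyset$ (every $t\in T$ has coefficient $\ge1$ in $T^{(2)}T$), and the support of $T^{(5)}$ has at most $|T|=2n+1$ elements.

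For $n\equiv4,7,13\pmod{15}$ I would argue through \eqref{ok1}. Sorting $g$ by its block and by whether $g\in T$ or $\mu_g=0$, a short check of the displayed congruence shows that the bulk of the two large blocks $X_0$ (of size $\tfrac{2n(n-1)}{3}$) and $X_3$ (of size $\tfrac{4n(n-1)}{3}$) lands in $Y_i$'s carrying small weight in \eqref{ok1}: in each of these three classes one of $X_0,X_3$ feeds essentially all of $Y_0$ (weight $0$) while the other feeds $Y_1$ or $Y_3$ (weight $2$ or $3$), and the small block $X_2$ together with the support of $T^{(5)}$ shift sizes by only $O(n)$. Hence three of $|Y_1|,|Y_2|,|Y_3|,|Y_4|$ are $O(n)$ while the fourth is at most the size of one large block plus $O(n)$, so
\[
2|Y_1|+3|Y_2|+3|Y_3|+2|Y_4|\ \le\ \max\{\,2|X_3|,\ 3|X_0|\,\}+O(n)\ =\ \tfrac{8}{3}\,n^{2}+O(n),
\]
which for $n$ past a small explicit bound contradicts the lower bound $4n^{2}+6n+2$ of \eqref{ok1}.

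The class $n\equiv1\pmod{15}$ does not yield to this, since there the large blocks feed $Y_2,Y_4$, whose weights $3,2$ are too heavy; so I would instead sum coefficients outright. From $y_g\equiv\beta_{i(g)}+\varepsilon\,[g\in T]-\mu_g\pmod5$, $y_g\ge0$, the bounds $\beta_{i(g)},\varepsilon\le4$, and $T\cap X_0=\emptyset$ (which excludes the single configuration that could force a downward rounding), one gets $y_g\ge\beta_{i(g)}+\varepsilon\,[g\in T]-\mu_g$ for every $g$. Summing and using $\sum_g[g\in T]=\sum_g\mu_g=2n+1$ gives
\[
(2n+1)^{2}=\sum_g y_g\ \ge\ \sum_g\beta_{i(g)}=\beta_1+4n\beta_2+\tfrac{2n(n-1)}{3}(\beta_0+2\beta_3),
\]
and for $n\equiv1\pmod5$ one computes $\beta_0+2\beta_3=8$, so the right side equals $\tfrac{16n^{2}-4n}{3}$, which exceeds $(2n+1)^2$ once $n\ge5$ — the desired contradiction. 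The only residues in the classes $1,4,7,13\pmod{15}$ lying below the thresholds used above are $n=4$ and $n=7$, both settled by Lemma \ref{lm:list<=100}.

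The step I expect to be the main obstacle is the case bookkeeping in the second paragraph: the constants $\beta_i$, the residue $\varepsilon$, and the nature of $T^{(5)}$ all change with $n\bmod5$, and for $n\equiv13\pmod{15}$ one has $5\mid|G|$, so $T^{(5)}$ is a genuine multiset and the estimates on the small residue classes must be run off its support rather than its cardinality; the reason those classes are indeed $O(n)$ is precisely that, in the relevant residue of $y_g$, every block other than the designated large one either meets $T$ or forces $\mu_g\ge1$, which is where the two facts above are used. After that, each of the four classes closes by the same elementary inequality manipulations as in Propositions \ref{prop:n=0mod3} and \ref{prop:nmod5=0}.
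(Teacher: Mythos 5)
Your proposal is correct and follows essentially the same route as the paper: reduce $T^{(4)}T$ modulo $5$ via \eqref{eq:T^4T}, substitute the exact block sizes from Lemma \ref{lm:n=1mod3}, handle the residues $n\equiv 2,3,4\pmod 5$ through the upper bound against \eqref{ok1} and the residue $n\equiv 1\pmod 5$ through the coefficient-sum identity \eqref{eq:Y_is_condition2}, and close the small cases with Lemma \ref{lm:list<=100}. The only real difference is that your exact accounting of $\varepsilon|T|-\sum_g\mu_g$ in the $n\equiv1\pmod5$ case sharpens the paper's threshold from $n\le 15$ to $n\le 4$, which is a cosmetic improvement.
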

\begin{proof}
	By Proposition \ref{prop:nmod5=0}, we only have to consider the 4 cases when $n\not\equiv 0 \pmod{5}$.
	
	\textbf{(\rn{1})} $n\equiv 1\pmod{5}$: By \eqref{eq:T^4T} and Lemma \ref{lm:n=1mod3},
	\begin{align*}
		T^{(4)}T &\equiv 4G+T^{(2)}T+T-T^{(5)} \pmod{5}\\
				 &\equiv 4G+X_1 + 2 X_2 + 3X_3+T-T^{(5)} \pmod{5}.
	\end{align*}
	As $e\in T, T^{(5)}, X_1, G$ and $e\not\in X_2, X_3$, the identity element $e$ appears in $Y_{5i}$ for some $i\geq 1$. In view of the above equation, we deduce that 	
	\[ e\notin X_3\setminus ( T\cup T^{(5)})\subset \bigcup_{i=1} Y_{5i+2}, \]
	\[ e\notin X_0\setminus ( T\cup T^{(5)})\subset \bigcup_{i=1} Y_{5i+4}, \mbox{ and } 
	 e\notin X_2\setminus ( T\cup T^{(5)})\subset \bigcup_{i=1} Y_{5i+1}.\]
	In view of \eqref{eq:Y_is_condition2}, we get 	
		\begin{align*}
		(2n+1)^2 &\geq  \sum_{i=0}5i|Y_{5i}| +\sum_{i=0}2|Y_{5i+2}| + \sum_{i=0}4|Y_{5i+4}|+\sum_{i=0}|Y_{5i+1}|\\
				 &\geq 5  + \sum_{i=0}2|Y_{5i+2}| + \sum_{i=0}4|Y_{5i+4}|+\sum_{i=0}|Y_{5i+1}|\\
				 &\geq 5 + 2|X_3\setminus ( T\cup T^{(5)})|+4|X_0\setminus ( T\cup T^{(5)})| + |X_2\setminus ( T\cup T^{(5)})| \\
				 &\geq 5 + 2|X_3| + 4|X_0| + |X_2|-4|(T\cup T^{(5)})\setminus \{e\}|\\
				 &\geq 5+ \frac{16n^2-52n}{3} \qquad \text{(by Lemma \ref{lm:n=1mod3}),}
	\end{align*}
	where the second last inequality comes from the fact that the number of elements of $T\cup T^{(5)}$ in the disjoint union of $X_0, X_2$ and $X_3$ is at most the size of $(T\cup T^{(5)})\setminus\{e\}$.
	
	This means $4n^2-64n+12\leq 0$ whence $n\leq 15$. However, according to Lemma \ref{lm:list<=100}, this is impossible.
	
	\textbf{(\rn{2})} $n\equiv 2\pmod{5}$: 
	\begin{align*}
	T^{(4)}T &\equiv 2T^{(2)}T-T^{(5)} \pmod{5}\\
	&\equiv 0X_0+2X_1 + 4 X_2 + X_3-T^{(5)} \pmod{5}.
	\end{align*}
	Recall that $X_0$, $X_1$, $X_2$ and $X_3$ form a partition of $G$ and all nonzero coefficients in $T^{(5)}$ are $1$. Therefore, 
 	\[ Y_1  \setminus T^{(5)}\subset X_3 , Y_2\setminus T^{(5)}\subset X_1,  Y_3\setminus T^{(5)}=\emptyset  \mbox{ and } Y_4\setminus T^{(5)}\subset X_2.\]
	It follows that $|Y_1| \leq |X_3|+x_1$,  $|Y_2|\leq |X_1|+x_2$,  $|Y_3|\leq x_3$ and $|Y_4|\leq |X_2|+x_3$ where $x_1+x_2+x_3+x_4\leq |T^{(5)}|=2n+1$. Hence, from \eqref{eq:value_Xi_nmod3=1} we can derive
	\begin{equation}\label{eq:2Y1+3Y2+3Y3+2Y4_upperbound_2}
		2|Y_1| + 3|Y_2| +3|Y_3| +2 |Y_4| \leq \frac{8n(n-1)}{3}+3+8n+3(2n+1)=\frac{8}{3}n^2+\frac{34}{3}n+6.
	\end{equation}
	On the other hand, it follows from \eqref{ok1} and \eqref{eq:2Y1+3Y2+3Y3+2Y4_upperbound_2}, we have
	\[\frac{8}{3}n^2+\frac{34}{3}n+6 \geq4n^2+6n+2,\]
	which means $n\leq 6$. Hence $n=2$. However, it contradicts the assumption that $n\equiv 1 \pmod{3}$.
	
	\textbf{(\rn{3})} $n\equiv 3\pmod{5}$: By \eqref{eq:T^4T} and Lemma \ref{lm:n=1mod3},
	\begin{align*}
	T^{(4)}T &\equiv G - 2T^{(2)}T+2T-T^{(5)} \pmod{5}\\
	&\equiv G+ 0X_0+3X_1 +  X_2 + 4X_3 + 2T-T^{(5)} \pmod{5}\\
	&\equiv 1X_0+4X_1 +  2X_2 + 0X_3 + 2T-T^{(5)} \pmod{5}.
	\end{align*}
	In this case, $5$ divides $|G|$ and it is not necessarily true that all nonzero coefficients in $T^{(5)}$ are $1$. One may write $T^{(5)}=\sum_{i=1}^{k} i Z_i $ where $\bigcup_{i=1}^{k} Z_i=T^{(5)}$ and
	$\sum_{i=1}^{k}  i|Z_i|=2n+1$. However, as we will see below, we can still get some contradiction by checking the bounds for $|Y_1|, |Y_2|, |Y_3|$ and $|Y_4|$ as before. 
	Observe that 
	\[ Y_1\backslash (T\cup T^{(5)})\subset X_0,  Y_2\backslash (T\cup T^{(5)})\subset X_2, \]
	\[ Y_3\backslash (T\cup T^{(5)})=\emptyset \mbox{ and } Y_4\backslash (T\cup T^{(5)})=\emptyset.\]
	Note that the last equation is true because $X_1=\{e\}$.
	\begin{align*}
			&2|Y_1| + 3|Y_2| +3|Y_3| +2 |Y_4|\\
			\leq& 2|X_0|+3|X_2|+3|T\cup T^{(5)}|\\
			 \leq& \frac{4n(n-1)}{3} +12n+12n +3 \qquad \text{(by Lemma \ref{lm:n=1mod3})}\\
			 =& \frac{4n^2}{3} +\frac{68n}{3}+3.
	\end{align*}
	By \eqref{ok1}, we have
	\[\frac{4n^2}{3} +\frac{68n}{3} +3 \geq4n^2+6n+2,\]
	which implies $n\leq 6$. Taking account of the values of $n$ modulo $3$ and $5$, we see that $n=13$.  However, according to Lemma \ref{lm:list<=100}, there is no lattice tiling of $\Z^{13}$ by $S(13,2)$.
	
	\textbf{(\rn{4})} $n\equiv 4\pmod{5}$: By \eqref{eq:T^4T} and Lemma \ref{lm:n=1mod3},
	\begin{align*}
	T^{(4)}T &\equiv 3G - T^{(2)}T+2T-T^{(5)} \pmod{5}\\
	&\equiv 3G + 0X_0+4X_1 + 3X_2 + 2X_3 + 2T-T^{(5)} \pmod{5}\\
	&\equiv 3X_0+2X_1 + X_2 + 0X_3 + 2T-T^{(5)} \pmod{5}.
	\end{align*}
	As before, we obtain
	\[ Y_1\backslash (T\cup T^{(5)})\subset X_2,  Y_2\backslash (T\cup T^{(5)})=\emptyset, \]
	\[  Y_3\backslash (T\cup T^{(5)})\subset X_0 \mbox{ and } Y_4\backslash (T\cup T^{(5)})=\emptyset.\]
	Together with \eqref{eq:value_Xi_nmod3=1} we get
	\begin{align*}
				&2|Y_1| + 3|Y_2| +3|Y_3| +2 |Y_4| \\
				\leq &2|X_2|+3|X_1|+3|X_0|+3|T\cup T^{(5)}|\\
				\leq &8n+ 3\frac{2n(n-1)}{3} +12n+3\\
				 =&2n^2+18n+3.	
	\end{align*}	
	By \eqref{ok1}, 
	\[ 4n^2+6n+2\leq 2n^2+18n+3.\]
	Hence $n\leq 6$ which means $n=4$. However, this value has been already excluded by Lemma \ref{lm:list<=100}.
\end{proof}

\medskip

\begin{proposition}\label{prop:n=2mod3}
	Theorem \ref{th:main} is true for $n\equiv 2 \pmod{3}$.
\end{proposition}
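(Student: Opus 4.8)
The plan is to follow the blueprint of the proof of Proposition \ref{prop:n=1mod3}. By Proposition \ref{prop:nmod5=0} we may assume $n\not\equiv 0\pmod 5$, so it suffices to treat the four cases $n\equiv 1,2,3,4\pmod 5$ separately. In each case I would reduce \eqref{eq:T^4T} modulo $5$ (using $T^5\equiv T^{(5)}\pmod 5$) and substitute $G=X_1+X_2+X_3+X_4$ together with $T^{(2)}T\equiv X_1+2X_2+3X_3+4X_4\pmod 5$, both of which follow from Lemma \ref{lm:n=2mod3} (recall $X_0=\emptyset$ and $X_i=\emptyset$ for $i\geq 5$). This rewrites the congruence as
\[
T^{(4)}T\ \equiv\ \sum_{j=1}^{4}c_j X_j\ +\ c_T\,T\ -\ T^{(5)}\pmod 5
\]
for explicit residues $c_1,c_2,c_3,c_4,c_T$ depending only on $n\bmod 5$. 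Writing $R:=T^{(5)}$ when $c_T\equiv 0$ (so $|R|\leq 2n+1$) and $R:=T\cup(\text{support of }T^{(5)})$ otherwise (so $|R|\leq 4n+1$), one then reads off that $X_j\setminus R\subseteq\bigcup_{i\geq 0}Y_{5i+c_j}$ for each $j$, exactly as in Proposition \ref{prop:n=1mod3}.

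For $n\equiv 2,3,4\pmod 5$ I expect the residue $c_1$ attached to the large set $X_1$ (of size $\tfrac{4n^2-2n+3}{3}$) to be nonzero, and likewise $c_4$ attached to $X_4$ (of size $\tfrac{2n^2-4n}{3}$). In that situation I would use \eqref{eq:Y_is_condition2}: since every element of $X_j\setminus R$ lies in some $Y_{5i+c_j}$ with $5i+c_j\geq c_j$, weighting gives
\[
(2n+1)^2=\sum_{i\geq 1}i|Y_i|\ \geq\ \sum_{j=1}^{4}c_j\,|X_j\setminus R|\ \geq\ \sum_{j=1}^{4}c_j|X_j|\ -\ 4|R|.
\]
Feeding in the sizes from Lemma \ref{lm:n=2mod3} and the bound on $|R|$ should yield a quadratic inequality forcing $n$ to be small — I anticipate roughly $n\leq 11$, $n\leq 6$, $n\leq 18$ in the three classes. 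Intersecting with $n\equiv 2\pmod 3$ then leaves at most the single value $n=14$ (from $n\equiv 4\pmod 5$), which is excluded by Lemma \ref{lm:list<=100}; the classes $n\equiv 2,3\pmod 5$ should leave no admissible $n\geq 3$.

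The case $n\equiv 1\pmod 5$ is the one I expect to be the real obstacle: there $c_1=0$, so $X_1$ may sit entirely inside $Y_0$ and the weighting above collapses. For this case I would instead invoke \eqref{ok1}. From the reduced congruence one gets $Y_1\setminus R\subseteq X_2$, $Y_2\setminus R\subseteq X_3$, $Y_3\setminus R\subseteq X_4$ and $Y_4\subseteq R$ (no $X_j$ carries the residue $4$), hence $|Y_1|\leq|X_2|+|R|$, $|Y_2|\leq|X_3|+|R|$, $|Y_3|\leq|X_4|+|R|$, $|Y_4|\leq|R|$ with $|R|\leq 4n+1$. Bounding $2|Y_1|+3|Y_2|+3|Y_3|+2|Y_4|$ from above (I expect a bound like $2n^2+46n+10$) and comparing with the lower bound $4n^2+6n+2$ from \eqref{ok1} should give $n\leq 20$, hence $n=11$, again finished off by Lemma \ref{lm:list<=100}.

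Besides the case split, the main technical care will go into: whether $5\mid|G|=2n^2+2n+1$, since when it does the support of $T^{(5)}$ rather than $T^{(5)}$ itself must be used (its elements can then carry "corrected" coefficients); and keeping the correction by $R$ tight enough — in particular accounting for $e\in X_1\cap R$ — so that the final quadratic bounds on $n$ stay inside the window already closed by Lemma \ref{lm:list<=100}, apart from the isolated leftovers $n=11$ and $n=14$.
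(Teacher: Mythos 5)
Your proposal is correct and follows essentially the same route as the paper: reduce \eqref{eq:T^4T} modulo $5$, substitute the partition from Lemma \ref{lm:n=2mod3}, split into the four classes $n\equiv 1,2,3,4\pmod 5$, bound via the weighted sum \eqref{eq:Y_is_condition2} when the large sets $X_1,X_4$ carry nonzero residues and via \eqref{ok1} in the case $n\equiv 1\pmod 5$ where $c_1=0$, then finish with Lemma \ref{lm:list<=100}. Your anticipated residues and quadratic bounds check out (the paper's constants are marginally tighter because it corrects only for the sets it actually uses, but the leftover values $n=11,14,17$ all avoid the exception list, so both versions close).
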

	\begin{proof} 
	By Proposition \ref{prop:nmod5=0}, we only have to investigate the 4 cases when $n\not\equiv 0 \pmod{5}$.
	
	\textbf{(\rn{1})} $n\equiv 1\pmod{5}$: By \eqref{eq:T^4T} and Lemma \ref{lm:n=2mod3},
	\begin{align*}
		T^{(4)}T &\equiv 4G+T^{(2)}T+T-T^{(5)} \pmod{5}\\
				 &\equiv 4G+X_1 +  2X_2 + 3X_3 + 4X_4+T-T^{(5)} \pmod{5}.
	\end{align*}
	Thus,  	
	\[ Y_1\backslash (T\cup T^{5)})\subset X_2, Y_2\backslash (T\cup T^{5)})\subset X_3, 
	Y_3\backslash (T\cup T^{5)})\subset X_4, Y_4\backslash (T\cup T^{5)})=\emptyset.\]
	Hence, 
	\begin{align*}
		&2|Y_1| + 3|Y_2| +3|Y_3| +2 |Y_4|\\ 
		\leq& 2|X_2|+3|X_2|+3|X_4|+3|T\cup T^{(5)}|\\
		\leq& 10n+(2n^2-4n)+12n+3\qquad (\text{By Lemma \ref{lm:n=2mod3}})\\
		=& 2n^2+18n+3.
	\end{align*}
	Therefore, $4n^2+6n+2\leq 2n^2+18n+3$ which means $n\leq 6$. By Lemma \ref{lm:list<=100}, this is impossible. 
	
	Alternatively, as $5\mid 2n^2+2n+1$, $8n-3\neq 5k^2$ for some $k\in \Z$, and $8n+1$ is not a square, one may also use \cite[Corollary 3.9 (1)]{zhang_nonexistence_2019} to prove this case.
	
	\textbf{(\rn{2})} $n\equiv 2\pmod{5}$: By \eqref{eq:T^4T} and Lemma \ref{lm:n=2mod3},
	\begin{align*}
	T^{(4)}T &\equiv 2T^{(2)}T-T^{(5)} \pmod{5}\\
	&\equiv 2X_1 + 4 X_2 + X_3 + 3X_4-T^{(5)} \pmod{5}.
	\end{align*}
	It is easy to see that 
	\[ X_1\backslash T^{(5)}\subset \bigcup_{i=0} Y_{5i+2} \mbox{ and } 
	X_4\backslash T^{(5)}\subset \bigcup_{i=0} Y_{5i+3}.\]
	It follows that 
	\[\sum_{i=0}(5i+2)|Y_{5i+2}| \geq 2|X_1|-2x \mbox{ and }  \sum_{i=0}(5i+3)|Y_{5i+3}| \geq 3 |X_4|-3y\]
	with $x+y\leq 2n+1$. 
	Thus, by \eqref{eq:value_Xi_nmod3=2}
	\begin{equation*}
	(2n+1)^2=\sum_{i=1}^Mi|Y_i|\geq 2|X_1|+3|X_4|-6n-3=\frac{14n^2-34n}{3}-1.
	\end{equation*}
	By calculation, we get $2n^2-46n-6\leq0$ which implies that $n\leq 23$. As $n$ is congruent to $2 \mod{5}$, $n\neq 16, 21$. Therefore by Lemma \ref{lm:list<=100}, we have a contradiction.
	
	\textbf{(\rn{3})} $n\equiv 3\pmod{5}$: By \eqref{eq:T^4T} and Lemma \ref{lm:n=2mod3},
	\begin{align*}
		T^{(4)}T &\equiv G - 2T^{(2)}T+2T-T^{(5)} \pmod{5}\\
				&\equiv G+ 3X_1 +  X_2 + 4X_3 + 2X_4 + 2T-T^{(5)} \pmod{5}\\
				&\equiv 4X_1 +  2X_2 + 0X_3 +3X_4 + 2T-T^{(5)} \pmod{5}.
	\end{align*}
	This implies that
	\[ X_1\backslash (T\cup T^{(5)})\subset \bigcup_{i=0} Y_{5i+4} \mbox{ and } 
	X_4\backslash (T\cup T^{(5)})\subset \bigcup_{i=0} Y_{5i+3}.\]
	Therefore, 
	\[\sum_{i=0}|Y_{5i+4}| \geq |X_1|-x,\]
	and 
	\[\sum_{i=0}|Y_{5i+3}| \geq  |X_4|-y\]
	where $0\leq x+y\leq 4n+1$. 
	Hence, by \eqref{eq:value_Xi_nmod3=2}
	\begin{equation*}
		(2n+1)^2=\sum_{i=1}^Mi|Y_i|\geq 4|X_1|+3|X_4|-16n -4=\frac{22n^2-68n}{3},
	\end{equation*}
	which implies that $10n^2-74n-3\leq 0$ whence $n\leq 7$. According to Lemma \ref{lm:list<=100}, there is no such a lattice tiling of $\Z^n$ by $S(n,2)$.
	
	\textbf{(\rn{4})} $n\equiv 4\pmod{5}$: By \eqref{eq:T^4T} and Lemma \ref{lm:n=2mod3},
	\begin{align*}
	T^{(4)}T &\equiv 3G - T^{(2)}T+2T-T^{(5)} \pmod{5}\\
	&\equiv 3G + 4X_1 + 3X_2 + 2X_3 + X_4 + 2T-T^{(5)} \pmod{5}\\
	&\equiv 2X_1 + X_2 + 0X_3 + 4X_4 + 2T-T^{(5)} \pmod{5}.
	\end{align*}
	As before, we obtain 
	\[\sum_{i=0}|Y_{5i+2}| \geq |X_1|-x,\]
	and 
	\[\sum_{i=0}|Y_{5i+4}| \geq  |X_4|-y\]
	where $0\leq x+y\leq 4n+1$. 
	Hence, by \eqref{eq:value_Xi_nmod3=2}
	\begin{equation*}
	(2n+1)^2=\sum_{i=1}^Mi|Y_i|\geq 2|X_1|+4|X_4|-16n-4=\frac{16n^2-68n}{3}-2,
	\end{equation*}
	which implies that $4n^2-74n-3\leq 0$ whence $n\leq 18$. As $n$ is congruent to $4 \mod 5$, $n\neq 16$. Thus by Lemma \ref{lm:list<=100}, this is impossible.
\end{proof}
\begin{proof}[Proof of Theorem \ref{th:main}]
	Propositions \ref{prop:n=0mod3}, \ref{prop:n=1mod3} and \ref{prop:n=2mod3} together form a complete proof of Theorem \ref{th:main}.
\end{proof}

\begin{remark}\label{remark:no_inf}
	 Although our main result shows that there is no lattice tiling of $\mathbb{Z}^n$ by $S(n, 2)$ for all $n\geq 3$; we still do not know whether Golomb-Welch conjecture has been proved in the case of $r = 2$ for infinitely many values of $n$. The reason is that we do not know whether $f(n) = 2n^2 +2n+1$ is a prime for infinitely many values of $n$. A positive answer to this question would solve a special case of the famous conjecture of Bunyakovsky (1857) that asks whether there exists an irreducible quadratic polynomial attaining a prime number value infinitely many times.
\end{remark}

\section*{Acknowledgment}
The authors thank the referees for their helpful comments and suggestions, which improve the presentation of this paper. The first author is supported by grantR-146-000-158-112, Ministry of Education, Singapore. The second author is supported by the National Natural Science Foundation of China (No.\ 11771451) and Natural Science Foundation of Hunan Province (No.\ 2019RS2031).

\end{document}